\newcommand{\nc}{\newcommand}
\newcommand{\delete}[1]{}
\nc{\mfootnote}[1]{\footnote{#1}} 
\nc{\todo}[1]{\tred{To do:} #1}
\nc{\mlabel}[1]{\label{#1}}  
\nc{\mcite}[1]{\cite{#1}}  
\nc{\mref}[1]{\ref{#1}}  
\nc{\mbibitem}[1]{\bibitem{#1}} 
\nc{\mlabel}[1]{\label{#1}  
{\hfill \hspace{1cm}{\bf{{\ }\hfill(#1)}}}}
\nc{\mcite}[1]{\cite{#1}{{\bf{{\ }(#1)}}}}  
\nc{\mref}[1]{\ref{#1}{{\bf{{\ }(#1)}}}}  
\nc{\mbibitem}[1]{\bibitem[\bf #1]{#1}} 
\newtheorem{theorem}{Theorem}[section]
\newtheorem{example}{Example}[section]
\newtheorem{definition}[theorem]{Definition}
\newtheorem{lemma}[theorem]{Lemma}
\newtheorem{prop-def}[theorem]{Proposition-Definition}
\newtheorem{assertion}{\ \ \ Assertion}[theorem]
\newtheorem{remark}[theorem]{Remark}
\newtheorem{proposition}[theorem]{Proposition}
\nc{\tred}[1]{\textcolor{red}{#1}}
\nc{\tblue}[1]{\textcolor{blue}{#1}}
\nc{\tgreen}[1]{\textcolor{green}{#1}}
\nc{\tpurple}[1]{\textcolor{purple}{#1}}
\nc{\btred}[1]{\textcolor{red}{\bf #1}}
\nc{\btblue}[1]{\textcolor{blue}{\bf #1}}
\nc{\btgreen}[1]{\textcolor{green}{\bf #1}}
\nc{\btpurple}[1]{\textcolor{purple}{\bf #1}}
\nc{\twovec}[2]{\left(\begin{array}{c} #1 \\ #2\end{array} \right )}
\nc{\threevec}[3]{\left(\begin{array}{c} #1 \\ #2 \\ #3 \end{array}\right )}
\nc{\twomatrix}[4]{\left(\begin{array}{cc} #1 & #2\\ #3 & #4 \end{array} \right)}
\nc{\threematrix}[9]{{\left(\begin{matrix} #1 & #2 & #3\\ #4 & #5 & #6 \\ #7 & #8 & #9 \end{matrix} \right)}}
\nc{\twodet}[4]{\left|\begin{array}{cc} #1 & #2\\ #3 & #4 \end{array} \right|}
\nc{\rk}{\mathrm{r}}
\nc{\redtext}[1]{\textcolor{red}{#1}}
\g@addto@macro{\endabstract}{\@setabstract}
\newcommand{\authorfootnotes}{\renewcommand\thefootnote{\@fnsymbol\c@footnote}}%
\begin{document}
\begin{center}
  \LARGE
Post-Lie algebra structures on the Witt algebra  \par \bigskip

  \normalsize
  \authorfootnotes
Xiaomin Tang   \footnote{Corresponding author: {\it X. Tang. Email:} x.m.tang@163.com}

  Department of Mathematics, Heilongjiang University,
Harbin, 150080, P. R. China

\end{center}


\begin{abstract}
In this paper, we characterize the graded post-Lie algebra structures and a class of
shifting post-Lie algebra structures on the Witt algebra.
We obtain some new Lie algebras and give a class of their modules.  As an application, the homogeneous Rota-Baxter operators and a class of
non-homogeneous Rota-Baxter operators of weight $1$ on the  Witt algebra are studied.
\vspace{2mm}

\noindent{\it Keywords:} Lie algebra, post-Lie algebra,   Witt algebra, Rota-Baxter operator
\vspace{2mm}

\noindent{\it MSC}: 17A30, 17B68, 17D25.

\end{abstract}






\setcounter{section}{0}
{\ }

 \baselineskip=20pt


\section{Introduction and preliminaries}

In recent years, post-Lie algebras have aroused the interest of a great many authors, see \cite{BGN,Burde2,Burde3,Burde1,Eb,Eb1,Mun,pan,tang2014}.
Post-Lie algebras were introduced around 2007 by B. Vallette  \cite{Vall}, who found
the structure in a purely operadic manner as the Koszul dual of a commutative trialgebra.
Moreover, Vallette \cite{Vall} proved that post-Lie algebras have the important algebraic property
of being Koszul. As pointed out by Hans Z. Munthe-Kaas \cite{Mun},
post-Lie algebras also arise naturally from the differential
geometry of homogeneous spaces and Klein geometries, topics that are closely
related to Cartan¡¯s method of moving frames.  In addition,  post-Lie algebras also turned up in relations with Lie groups \cite{Burde1,Mun}, classical Yang-Baxter equation \cite{BGN}, Hopf algebra and classical r-matrices \cite{Eb2} and Rota-Baxter operators\cite{Galb}.

One of the most important problems in the study of post-Lie algebras is to find the post-Lie algebra structures on the (given) Lie algebras. In \cite{pan}, the authors determined all post-Lie algebra structures on $sl(2, \mathbb{C})$ of special linear Lie algebra of order $2$, and in \cite{tang2014}, by using the Gr\"{o}bner basis the package in computer algebra software Maple, they studied the post-Lie algebra structures on the solvable Lie algebra $t(2, \mathbb{C})$ (the Lie algebra of $2\times 2$ upper triangular matrices) and give all $65$ types of these post-Lie algebra structures.

It may be useful and interesting for characterizing the post-Lie algebra structures
on some important Lie algebras. As we see, most of the study on post-Lie algebras has been focused on the finite-dimensional
case. It is natural to consider the infinite-dimensional case.  In particular, the authors \cite{Galb} obtained a class of induced post-Lie algebra structures on the Witt algebra by use of the homogeneous Rota-Baxter operators.  As a matter of fact, the Rota-Baxter operators were originally defined on associative algebras by G. Baxter to solve an analytic formula in probability \cite{baxter} and then developed by the Rota school \cite{rota}. These operators have showed up in many areas in mathematics and mathematical physics (see \cite{chu,Galb,guol,tang20144} and the references therein).

Recall that the Witt algebra $W$ is an important infinite-dimensional Lie algebra with the $\mathbb{C}$-basis $\{L_m| m\in \mathbb{Z} \}$ and the Lie brackets are defined by
\begin{equation*}
[L_m,L_n]=(m-n)L_{m+n}.
\end{equation*}
The Witt algebra occurs in the study of conformal field theory and plays an important role in many areas of mathematics and physics. Based on this background, we will study the post-Lie algebra structures on $W$ in this paper.  Below we will denote by $\mathbb{C}$ and $\mathbb{Z}$ the complex number field and the set of integer numbers, respectively. For a fixed integer $k$,  let $\mathbb{Z}_{>k}=\{t\in \mathbb{Z}|t>k\}$,   $\mathbb{Z}_{<k}=\{t\in \mathbb{Z}|t<k\}$,  $\mathbb{Z}_{\geqslant k}=\{t\in \mathbb{Z}|t\geqslant k\}$ and $\mathbb{Z}_{\leqslant k}=\{t\in \mathbb{Z}|t\leqslant k\}$. We also assume that the field in this paper always is the complex number field  $\mathbb{C}$ since  the Witt algebra is defined over  $\mathbb{C}$.
We now turn to the definition of post-Lie algebra following reference \cite{Vall}.

\begin{definition}\label{defi[]}
A post-Lie algebra $(V, \circ, [ , ])$ is a vector space $V$ over a field $k$ equipped with two
$k$-bilinear operations $x\circ y$ and $[x, y]$, such that $(V, [, ])$ is a Lie algebra and
\begin{eqnarray}
&&[x, y] \circ z =  x \circ (y \circ z)-y \circ (x \circ z)-<x,y> \circ z, \label{post1}\\
&&x\circ [y, z] = [x\circ y, z] + [y, x \circ z] \label{post2}
\end{eqnarray}
for all $x, y, z \in V$, where $<x,y>=x\circ y-y\circ x$.  We also say that  $(V, \circ, [ , ])$ is a post-Lie algebra structure on the Lie algebra $(V, [, ])$.  If a post-Lie algebra $(V, \circ, [ , ])$ such that $x\circ y=y\circ x$ for all $x,y\in V$, then it is called a commutative post-Lie algebra.
\end{definition}

\begin{definition}\label{iso}
Suppose that $(L, [,])$ is a Lie algebra. Two post-Lie algebras $(L, [,], \circ_1)$ and $(L, [,], \circ_2)$ on the Lie algebra $L$ are said to be isomorphic if there is an automorphism $\tau$ of the Lie algebra $(L, [,])$ such that
$\tau (x\circ_1 y)=\tau(x)\circ_2 \tau(y)$ for all $x,y\in L$.
\end{definition}

It is not difficult to verify the following proposition.

\begin{proposition}\label{equ1}
Let  $(V, \circ, [, ])$ be a post-Lie algebra defined by Definition \ref{defi[]}. Then the following operation
\begin{equation}\label{laoz}
\{x, y\} \triangleq <x,y> + [x, y],
\end{equation}
induces an another Lie algebra structure on $V$, where $<x,y>=x\circ y-y\circ x$. Furthermore, if two post-Lie algebras $(V, \circ_1, [, ])$ and
$(V, \circ_2, [, ])$ on the same Lie algebra $(V, [, ])$ are isomorphic, then the two induced Lie algebras $(V, \{, \}_1)$ and $(V, \{, \}_2)$
are isomorphic.
\end{proposition}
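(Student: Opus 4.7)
The plan is to verify the two parts of the proposition separately, treating the Lie-algebra property of $\{,\}$ as the main computation and deducing the isomorphism statement as a short corollary of the definitions.

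For the first part, antisymmetry of $\{,\}$ is immediate since both $<x,y>=x\circ y-y\circ x$ and $[x,y]$ are antisymmetric. The content is the Jacobi identity. I would expand
\[
\{x,\{y,z\}\}=x\circ\{y,z\}-\{y,z\}\circ x+[x,\{y,z\}]
\]
with $\{y,z\}=<y,z>+[y,z]$, and then sum over the cyclic permutations of $(x,y,z)$. The resulting terms naturally split into three groups: (i) pure $[,]$-brackets, which vanish by the Jacobi identity of $[,]$; (ii) terms of the form $[x\circ y,z]$, $[y,x\circ z]$ and $x\circ[y,z]$, which cancel in cyclic sums by axiom \eqref{post2}; (iii) iterated $\circ$-products together with expressions $[x,y]\circ z$, which cancel after applying axiom \eqref{post1} to rewrite $[x,y]\circ z$ as $x\circ(y\circ z)-y\circ(x\circ z)-<x,y>\circ z$. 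The key observation driving the bookkeeping is that axiom \eqref{post1} is exactly the statement that the left multiplications $L_x:y\mapsto x\circ y$ form a Lie-algebra representation of $(V,\{,\})$ on $(V,[,])$, so the terms of type (iii) line up precisely after cyclic summation.

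For the second part, suppose $\tau$ is an automorphism of $(V,[,])$ with $\tau(x\circ_1 y)=\tau(x)\circ_2\tau(y)$. Then
\[
\tau(\{x,y\}_1)=\tau(x\circ_1 y)-\tau(y\circ_1 x)+\tau([x,y])=\tau(x)\circ_2\tau(y)-\tau(y)\circ_2\tau(x)+[\tau(x),\tau(y)]=\{\tau(x),\tau(y)\}_2,
\]
using that $\tau$ respects $[,]$. Hence the same $\tau$ serves as a Lie-algebra isomorphism $(V,\{,\}_1)\to(V,\{,\}_2)$.

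The only genuine obstacle is the bookkeeping in the Jacobi verification for $\{,\}$; I expect no conceptual difficulty once the terms of types (i)--(iii) above are grouped, because axioms \eqref{post1} and \eqref{post2} together with the Jacobi identity of $[,]$ are tailored precisely to kill these three groups. The isomorphism part is then essentially tautological from the definition of a post-Lie isomorphism.
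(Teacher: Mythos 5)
Your proposal is correct: the direct expansion of the cyclic sum, using \eqref{post1} to reduce $\{y,z\}\circ x$ to $y\circ(z\circ x)-z\circ(y\circ x)$ and \eqref{post2} to expand $x\circ[y,z]$, does make all terms cancel, and the isomorphism part is the tautological computation you give. The paper omits the proof entirely (stating only that it "is not difficult to verify"), and your argument is exactly the intended verification.
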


\begin{remark} \label{remark1}
The left multiplications of the post-Lie algebra $(V,[,], \circ)$ are denoted by $\mathcal{L}(x)$, i.e., we have $\mathcal{L}(x)(y) = x\circ y $ for all $x,y\in V$. By (\ref{post2}), we see that all operators $\mathcal{L}(x)$ are Lie algebra derivations of Lie algebra $(V,[,])$.
\end{remark}

Our results can be briefly summarized as follows: In Section 2, we classify the graded post-Lie algebra structures on the Witt algebra $W$, and then we
obtain the induced graded Lie algebras. In Section 3, we classify a class of shifting post-Lie algebra structures on the Witt algebra $W$, and give the induced non-graded Lie algebras.  In Section 4, we first recall the other definition of post-Lie algebra and give a new definition, and then the modules over some Lie algebras are given. In Section 5, we give the induced Rota-Baxter operators of weight $1$ from the post-Lie algebras on $W$.

\section{The graded post-Lie algebra structure On the Witt algebra}\label{grad}

Recently \cite{tangw22} proved that any commutative post-Lie algebra structure on the Witt algebra
$W$ is trivial (namely, $x\circ y=0$ for all $x,y\in W$). We now will dedicate to the study for the non-commutative case.
Since the Witt algebra is graded, it is also natural to suppose first
that the  algebras should be graded. Hence, in this section, we mainly
consider the graded post-Lie algebra structure on the Witt algebra $W$. Namely, we assume that it satisfies
\begin{equation}\label{mainequ1}
L_m\circ L_n=\phi(m,n)L_{m+n},\;\;\forall m,n\in \mathbb{Z},
 \end{equation}
where $\phi$ is a complex-value function on $\mathbb{Z}\times \mathbb{Z}$.

\begin{lemma}\label{ffmm}
There exists a graded post-Lie algebra structure on the Witt algebra $(W, [,])$ satisfying (\ref{mainequ1}) if and only if
there is a complex-value function $f$ on $\mathbb{Z}$ such that
\begin{eqnarray}
&& \phi(m,n)=(m-n)f(m), \label{1121} \\
&& (m-n)(f(m+n)+f(m)f(m+n)+f(n)f(m+n)-f(m)f(n))=0, \ \forall m,n\in \mathbb{Z}.  \label{f(mn)}
\end{eqnarray}
\end{lemma}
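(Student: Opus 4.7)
The plan is to exploit the two post-Lie axioms in sequence. Throughout, I will use that on the Witt algebra the axiom (\ref{post2}) asserts, via Remark~\ref{remark1}, that each $\mathcal{L}(L_m)$ is a derivation of $W$, while (\ref{post1}) imposes a left-symmetric-type compatibility.

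For necessity, I would first plug $x=L_m$, $y=L_n$, $z=L_p$ into (\ref{post2}). Expanding both sides using $[L_n,L_p]=(n-p)L_{n+p}$ reduces the axiom to the scalar recurrence
\begin{equation*}
(n-p)\phi(m,n+p)=(m+n-p)\phi(m,n)+(n-m-p)\phi(m,p).
\end{equation*}
For $m\neq 0$, setting $p=0$ gives $-m\,\phi(m,n)=(n-m)\phi(m,0)$, hence $\phi(m,n)=(m-n)f(m)$ with $f(m):=\phi(m,0)/m$. For $m=0$, the same recurrence reduces (upon dividing by $n-p\neq 0$) to Cauchy-type additivity $\phi(0,n+p)=\phi(0,n)+\phi(0,p)$, from which a short case analysis forces $\phi(0,n)=n\,\phi(0,1)$; setting $f(0):=-\phi(0,1)$ then writes this as $\phi(0,n)=(0-n)f(0)$, establishing (\ref{1121}) in all cases.

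Next, I would substitute the ansatz $\phi(m,n)=(m-n)f(m)$ into (\ref{post1}) with $x=L_m$, $y=L_n$, $z=L_p$, using $<L_m,L_n>=(m-n)(f(m)+f(n))L_{m+n}$. Direct expansion, combined with the polynomial identity
\begin{equation*}
(n-p)(m-n-p)+(m-p)(m+p-n)=(m-n)(m+n-p),
\end{equation*}
reduces the axiom to the single scalar condition
\begin{equation*}
(m-n)(m+n-p)\bigl[f(m+n)+f(m)f(m+n)+f(n)f(m+n)-f(m)f(n)\bigr]=0.
\end{equation*}
Since $p$ is arbitrary, one may choose $p$ with $m+n-p\neq 0$ to cancel that factor, producing exactly (\ref{f(mn)}).

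For sufficiency, given $f$ satisfying (\ref{f(mn)}), I would define $\circ$ by $L_m\circ L_n=(m-n)f(m)L_{m+n}$ and verify both axioms. Axiom (\ref{post2}) reduces to the recurrence displayed above, which follows from a short polynomial manipulation that does not invoke (\ref{f(mn)}); axiom (\ref{post1}) follows by reversing the expansion of the previous paragraph, with (\ref{f(mn)}) supplying exactly the needed coefficient identity. I expect the main obstacle to be spotting and executing the polynomial factorization $(n-p)(m-n-p)+(m-p)(m+p-n)=(m-n)(m+n-p)$ that collapses the right-hand side of (\ref{post1}); a secondary obstacle is the $m=0$ case of the derivation step, where one must replace the clean single-line division by a short additivity argument.
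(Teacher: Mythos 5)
Your proposal is correct, and the computations check out: the expansion of (\ref{post2}) does give the recurrence $(n-p)\phi(m,n+p)=(m+n-p)\phi(m,n)+(n-m-p)\phi(m,p)$, the factorization $(n-p)(m-n-p)+(m-p)(m+p-n)=(m-n)(m+n-p)$ is an identity, and choosing $p\neq m+n$ legitimately removes the spurious factor so that (\ref{post1}) collapses to (\ref{f(mn)}). Where you differ from the paper is in how (\ref{1121}) is obtained. The paper invokes Remark \ref{remark1} to say each $\mathcal{L}(L_m)$ is a derivation of $W$, then cites the classical fact that every derivation of the Witt algebra is inner, writes $L_m\circ L_n=[\psi(L_m),L_n]$ with $\psi(L_m)=\sum_i k_i^{(m)}L_i$, and compares coefficients against the graded ansatz to force $\psi(L_m)=f(m)L_m$; the derivation of (\ref{f(mn)}) from (\ref{post1}) is then the same "simple computation" you carry out explicitly. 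Your route replaces the external input (the inner-derivation theorem, cited to \cite{zhulin}) by a direct solution of the functional equation coming from (\ref{post2}), at the cost of a separate Cauchy-additivity argument for the $m=0$ stratum, which the paper's method handles uniformly. What you gain is a self-contained, elementary proof that also makes the sufficiency direction transparent (axiom (\ref{post2}) holds identically for any $f$ once $\phi(m,n)=(m-n)f(m)$, and (\ref{post1}) is exactly (\ref{f(mn)})); what the paper's approach buys is brevity and a conceptual explanation of why the left multiplications must be multiples of $\operatorname{ad}L_m$. Both are valid proofs of the lemma.
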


\begin{proof} The ``if'' part is easy to check. Next we  prove the ``only if'' part.
 By Remark \ref{remark1}, $\mathcal{L}(x)$ is a derivation of $W$. It is well known that every derivation of Witt algebra is inner \cite{zhulin}. So we have
\begin{equation}\label{mainequ2}
L_m\circ L_n=\mathcal{L}(L_m)(L_n)={\rm {ad}} (\psi (L_m)) L_n=[\psi(L_m),L_n]
\end{equation}
for some linear map $\psi$ from $W$ into itself.
Denote by $\psi (L_m)=\sum_{i\in \mathbb{Z}} k_i^{(m)}L_i$, where $k_i^{(m)}\in \mathbb{C}$ for any $i\in \mathbb{Z}$.  Then we have by (\ref{mainequ2}) that
$
L_m\circ L_n=\sum_{i\in \mathbb{Z}} (i-n)k_i^{(m)}L_{n+i}.
$
This, together with (\ref{mainequ1}), yields that $(i-n)k_i^{(m)}=0$ for any $i\in \mathbb{Z}\setminus\{m\}$ and $\phi(m,n)=(m-n)k_m^{(m)}$. This means that
$k_i^{(m)}=0$ for any $i\neq m$. Let $f(m)=k_m^{(m)}$ be a complex-valued function on $\mathbb{Z}$, which implies (\ref{1121}). Next, by (\ref{post1}) with a simple computation we obtain (\ref{f(mn)}).
\end{proof}

Let $(\mathcal{P}(\phi_i), \circ_i), i=1,2$ be two algebras on the Witt algebra $W=\mathcal{P}(\phi_i)$ equipped with $k$-bilinear operations $x\circ_i y$ such that
$L_m\circ_i L_n=\phi_i(m,n)L_{m+n}$ for all $m,n\in \mathbb{Z}$, where  $\phi_i, i=1,2$ are two complex-valued functions on $\mathbb{Z}\times \mathbb{Z}$. Furthermore, let $\tau: \mathcal{P}(\phi_1)\rightarrow \mathcal{P}(\phi_2)$ be a map given by $\tau(L_m)=-L_{-m}$ for all $m\in \mathbb{Z}$.
Clearly, $\tau$ is a Lie automorphism of the Witt algebra $(W, [,])$. Furthermore, we have

\begin{proposition}\label{iso1}
Let $(\mathcal{P}(\phi_i), \circ_i), i=1,2$ be two algebras and $\tau: \mathcal{P}(\phi_1)\rightarrow \mathcal{P}(\phi_2)$ be a map defined as above. Suppose that $\mathcal{P}(\phi_1)$ is a post-Lie algebra. Then $\mathcal{P}(\phi_2)$ is a post-Lie algebra and $\tau$ is an isomorphism from $\mathcal{P}(\phi_1)$ to $\mathcal{P}(\phi_2)$ if and only if $\phi_2(m,n)=-\phi_1(-m,-n)$.
\end{proposition}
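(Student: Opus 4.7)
The plan is to exploit the fact that $\tau$ is already given to be a Lie automorphism of $(W,[,])$, so the whole content of the statement sits on the single $\circ$-multiplication relation $\tau(x \circ_1 y) = \tau(x) \circ_2 \tau(y)$. I would therefore reduce everything to a direct computation on the basis $\{L_m\}$ and then transport the post-Lie axioms along $\tau$.

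For the forward implication, I assume that $\mathcal{P}(\phi_2)$ is a post-Lie algebra and that $\tau$ is an isomorphism from $\mathcal{P}(\phi_1)$ onto $\mathcal{P}(\phi_2)$ in the sense of Definition \ref{iso}. Apply $\tau$ to $L_m \circ_1 L_n = \phi_1(m,n) L_{m+n}$ on both sides: the left-hand side gives $-\phi_1(m,n) L_{-(m+n)}$, while the right-hand side equals
\[
\tau(L_m) \circ_2 \tau(L_n) = (-L_{-m}) \circ_2 (-L_{-n}) = \phi_2(-m,-n) L_{-(m+n)}.
\]
Comparing coefficients of $L_{-(m+n)}$ yields $\phi_2(-m,-n) = -\phi_1(m,n)$, which is exactly the asserted identity after relabeling.

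For the reverse implication, suppose $\phi_2(m,n) = -\phi_1(-m,-n)$. I would first verify, via the same line of computation just performed in reverse, that the linear extension of $\tau(L_m) = -L_{-m}$ satisfies $\tau(L_m \circ_1 L_n) = \tau(L_m) \circ_2 \tau(L_n)$ for all $m,n \in \mathbb{Z}$, and hence $\tau(x \circ_1 y) = \tau(x) \circ_2 \tau(y)$ for all $x,y \in W$ by bilinearity. Since $\tau$ is already a bijective Lie homomorphism for $[,]$, it is a bijective map preserving both operations from $(\mathcal{P}(\phi_1), \circ_1, [,])$ to $(\mathcal{P}(\phi_2), \circ_2, [,])$. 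The post-Lie axioms (\ref{post1}) and (\ref{post2}) can then be pulled back through $\tau^{-1}$: each term in these identities for $\mathcal{P}(\phi_2)$ is obtained by applying $\tau$ to the corresponding term in $\mathcal{P}(\phi_1)$, so the identities for $\mathcal{P}(\phi_2)$ follow from those for $\mathcal{P}(\phi_1)$. This shows $\mathcal{P}(\phi_2)$ is a post-Lie algebra, and $\tau$ is by construction an isomorphism in the sense of Definition \ref{iso}.

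There is no serious obstacle here: the only genuine content is the basis-level relation between $\phi_1$ and $\phi_2$ forced by $\tau(L_m) = -L_{-m}$, and everything else is formal transport of structure along a bijection that preserves both brackets. The mildly delicate point is simply keeping the sign bookkeeping straight, since both $\tau(L_m) = -L_{-m}$ contributes a minus sign and the pair $(-L_{-m}) \circ_2 (-L_{-n})$ contributes a $(-1)^2 = 1$, so that the net sign in the final identity comes from the $-L_{-m-n}$ on the left; I would display this calculation explicitly to avoid any ambiguity.
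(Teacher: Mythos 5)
Your proof is correct. The forward direction is the same coefficient comparison the paper performs. The reverse direction, however, takes a genuinely different route: the paper does not transport the axioms along $\tau$, but instead invokes Lemma \ref{ffmm} --- since $\mathcal{P}(\phi_1)$ is a post-Lie algebra there is $f_1$ with $\phi_1(m,n)=(m-n)f_1(m)$ satisfying the functional equation (\ref{f(mn)}); setting $f_2(m)=f_1(-m)$ one checks that $\phi_2(m,n)=(m-n)f_2(m)$ and that $f_2$ satisfies the same functional equation, so Lemma \ref{ffmm} certifies $\mathcal{P}(\phi_2)$ as a post-Lie algebra, after which the intertwining identity is verified separately. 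Your transport-of-structure argument is cleaner and more general: once you know $\tau$ is a linear bijection preserving both $[\,,\,]$ and the $\circ$-products, every term of (\ref{post1}) and (\ref{post2}) for $\circ_2$ is the $\tau$-image of the corresponding term for $\circ_1$ (note that $\langle \tau x,\tau y\rangle_2=\tau(\langle x,y\rangle_1)$ as well), and injectivity of $\tau$ finishes the job without ever needing the classification of graded structures by the function $f$. The paper's route has the minor virtue of staying entirely inside the machinery of Lemma \ref{ffmm}, which is reused later, but yours would work verbatim for any bijective bracket-preserving, product-intertwining map, not just this particular $\tau$. Do make sure you state explicitly that $\langle\,,\,\rangle_2$ is intertwined by $\tau$ (it follows from the intertwining of $\circ$, but it is the one composite term in (\ref{post1}) worth displaying).
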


\begin{proof} Suppose that $\mathcal{P}(\phi_2)$ is a post-Lie algebra and $\tau$ is an isomorphism from $\mathcal{P}(\phi_1)$ to $\mathcal{P}(\phi_2)$.
Then we have $\tau(L_m \circ_1 L_n)=-\phi_1(m,n)L_{-(m+n)}$ and $\tau(L_m)\circ_2 \tau(L_n)=L_{-m}\circ_2 L_{-n}=\phi_2(-m,-n)$. By $\tau(L_m \circ_1 L_n)=\tau(L_m)\circ_2 \tau(L_n)$, it follows that $\phi_2(m,n)=-\phi_1(-m,-n)$.

Conversely, suppose that $\phi_2(m,n)=-\phi_1(-m,-n)$ for all $m,n\in \mathbb{Z}$. Notice that $\mathcal{P}(\phi_1)$ is a post-Lie algebra, by Lemma \ref{ffmm} we know there is a complex-valued function $f_1$ on $\mathbb{Z}$ such that
\begin{eqnarray}
&& \phi_1(m,n)=(m-n)f_1(m), \label{tt1} \\
&& (m-n)(f_1(m+n)+f_1(m)f_1(m+n)+f_1(n)f_1(m+n)-f_1(m)f_1(n))=0,  \label{tt2}
\end{eqnarray}
for all $m,n\in \mathbb{Z}$.
By (\ref{tt1}), we have $\phi_2(m,n)=-\phi_1(-m,-n)=-(n-m)f_1(-m)$. Let $f_2(m)=f_1(-m)$ where $f_2$ is a complex-valued function on $\mathbb{Z}$, then it follows that
\begin{equation}\label{tt3}
 \phi_2(m,n)=(m-n)f_2(m).
\end{equation}
Furthermore, by (\ref{tt2}) and $f_2(m)=f_1(-m)$ we have that
\begin{equation}\label{tt4}
 (m-n)(f_2(m+n)+f_1(m)f_2(m+n)+f_2(n)f_2(m+n)-f_2(m)f_2(n))=0.
\end{equation}
Lemma \ref{ffmm} with (\ref{tt3}) and (\ref{tt4}) tells us that $\mathcal{P}(\phi_2)$ is a post-Lie algebra. The remainder is to prove that $\tau$ is an isomorphism. But one has
$$
\tau(L_m \circ_1 L_n)=-\phi_1(m,n)L_{-(m+n)}=\phi_2(-m,-n)L_{-(m+n)}=\tau(L_m) \circ_2 \tau(L_m),
$$
which completes the proof.
\end{proof}

For a complex-valued function $f$ on $\mathbb{Z}$, we denote $I,J$ by $$I=\{m\in \mathbb{Z}| f(m)=0\}, \ \ J=\{m\in \mathbb{Z}| f(m)=-1\}.$$

\begin{lemma}\label{IJ}
There exists a graded post-Lie algebra structure on the Witt algebra $(W, [,])$ satisfying (\ref{mainequ1}) if and only if
there is a complex-valued function $f$ on $\mathbb{Z}$ such that (\ref{1121}) holds and it satisfies

{\rm (i)} $m\in I\cup J$ for all $m\neq 0$, and

{\rm (ii)} $m,n\in I\Rightarrow m+n\in I$  and  $m+n\in J\Rightarrow m,n\in J$  for $m\neq n$.
\end{lemma}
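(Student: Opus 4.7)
The plan is to apply Lemma \ref{ffmm}, which reduces the problem to the existence of a function $f\colon\mathbb{Z}\to\mathbb{C}$ satisfying (\ref{1121}) together with the functional identity (\ref{f(mn)}). It remains to show that, in the presence of (\ref{1121}), the identity (\ref{f(mn)}) is equivalent to the conditions (i) and (ii).

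For the forward direction, I would first observe that (\ref{f(mn)}) is trivially satisfied when $m=n$. When $m\neq n$, dividing out the $(m-n)$ factor reduces (\ref{f(mn)}) to the cleaner identity
$$
f(m+n)\bigl(1 + f(m) + f(n)\bigr) = f(m)f(n).
$$
Specialising to $n=0$ with $m\neq 0$ collapses this to $f(m)\bigl(1+f(m)\bigr)=0$, immediately yielding (i). With (i) in hand, both $f(m)$ and $f(n)$ lie in $\{0,-1\}$, and a case-by-case inspection of the displayed relation then extracts (ii): when $f(m)=f(n)=0$ the equation forces $f(m+n)=0$; when $f(m)=f(n)=-1$ it forces $f(m+n)=-1$; and a mixed pair makes both sides vanish identically, so imposes no further restriction on $f(m+n)$.

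The converse is obtained by reversing this case analysis: assume (i) and (ii), fix $m\neq n$, split according to the possible values of $f(m),f(n)$ allowed by (i), use (ii) to determine $f(m+n)$ in the two homogeneous cases, and check directly that the simplified identity holds; the case $m=n$ is automatic from the prefactor $(m-n)$ in (\ref{f(mn)}).

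The bulk of the argument is bookkeeping once the simplified form of (\ref{f(mn)}) is isolated. The delicate point I expect is handling pairs in which one entry is $0$, since $f(0)$ is left unconstrained by (i); one must verify that the $n=0$ instances of the simplified identity are already forced by (i) alone, so that no additional restriction on $f(0)$ beyond what the statement allows is needed when reassembling the converse.
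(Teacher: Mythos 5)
Your overall route is the same as the paper's: invoke Lemma \ref{ffmm}, divide (\ref{f(mn)}) by $m-n$ for $m\neq n$, set $n=0$ to obtain (i), and then inspect the identity $f(m+n)(1+f(m)+f(n))=f(m)f(n)$ case by case over $f(m),f(n)\in\{0,-1\}$. The paper's own proof is exactly this, only stated more tersely, and your handling of the $f(0)$ subtlety (the $n=0$ instances collapse to (i), while the $m+n=0$ instances are absorbed by applying the closure statements to the pair $m,-m$) is sound.

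There is, however, one step that does not go through as written: the claim that the case inspection ``extracts (ii).'' What the case $f(m)=f(n)=-1$ actually yields is $f(m+n)=-1$, i.e.\ the closure statement $m,n\in J\Rightarrow m+n\in J$. The second clause of (ii) as printed is the reverse implication, $m+n\in J\Rightarrow m,n\in J$, and that implication is \emph{not} a consequence of (\ref{f(mn)}): for the structure $\mathcal{P}_5$ of Theorem \ref{mmgg} one has $f(\mathbb{Z}_{\geqslant 2})=-1$ and $f(\mathbb{Z}_{\leqslant 1})=0$, so taking $m=3$, $n=-1$ gives $m+n=2\in J$ while $n\notin J$. (What does follow from $f(m+n)=-1$ is $(1+f(m))(1+f(n))=0$, i.e.\ $m\in J$ \emph{or} $n\in J$.) Your converse likewise needs the closure version when it ``uses (ii) to determine $f(m+n)$'' in the $J$-homogeneous case; the printed clause determines nothing there. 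So your computation is correct, but it establishes the equivalence of (\ref{f(mn)}) with (i) plus the two closure conditions $m,n\in I\Rightarrow m+n\in I$ and $m,n\in J\Rightarrow m+n\in J$, not with (ii) as literally stated; the discrepancy traces to a slip in the lemma's wording, but a complete proof must say so explicitly rather than silently identify the two.
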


\begin{proof}

The ``if'' part is easy to check. Next we  prove the ``only if'' part.  By Lemma \ref{ffmm}, there is a complex-valued function $f$ on $\mathbb{Z}$ satisfying (\ref{1121}) and (\ref{f(mn)}).
Let $n=0$ in (\ref{f(mn)}), we have $m(f(m)+f(m)^2)=0.$ Thus, for $m\neq 0$, one has $f(m)=0$ or $f(m)=-1$. This implies the conclusion of (i) holds. Now, we chose a pair of $m, n\in \mathbb{Z}$ with $m\neq n$, then it can be obtained by (\ref{f(mn)}) that
$$
f(m+n)+f(m)f(m+n)+f(n)f(m+n)-f(m)f(n)=0.
$$
It easy to see by the above equation that the conclusion of (ii) holds.
\end{proof}

Our main result of this section is the following.

\begin{theorem}\label{mmgg}
A graded post-Lie algebra structure satisfying (\ref{mainequ1}) on the Witt algebra $W$  must be one of the following types.

{\rm ($\mathcal{P}_1$)}  $L_m \circ_1 L_n=0$ for all $m,n\in \mathbb{Z}$;

{\rm ($\mathcal{P}_2$)}  $L_m \circ_2 L_n=(n-m)L_{m+n}$ for all $m,n\in \mathbb{Z}$;

{\rm ($\mathcal{P}_3^a$)}
$L_m \circ_3 L_n=\begin{cases}
(n-m)L_{m+n}, &  m> 0,\\
naL_n, & m=0, \\
0, & m<0;
\end{cases}
$

{\rm ($\mathcal{P}_4^a$)}
$L_m \circ_4 L_n=\begin{cases}
(n-m)L_{m+n}, &  m< 0,\\
naL_n, & m=0, \\
0, & m>0;
\end{cases}
$

{\rm ($\mathcal{P}_5$)}
$L_m \circ_5 L_n=\begin{cases}
(n-m)L_{m+n}, & m\geqslant 2,\\
0, & m\leqslant 1;
\end{cases}
$

{\rm ($\mathcal{P}_6$)}
$L_m \circ_6 L_n=\begin{cases}
(n-m)L_{m+n}, & m\geqslant 1,\\
0, & m\leqslant 2;
\end{cases}
$

{\rm ($\mathcal{P}_7$)}
$L_m \circ_7 L_n=\begin{cases}
(n-m)L_{m+n}, & m\leqslant -2,\\
0, & m\geqslant -1;
\end{cases}
$

{\rm ($\mathcal{P}_8$)}
$L_m \circ_8 L_n=\begin{cases}
(n-m)L_{m+n}, & m\geqslant -1,\\
0, & m\leqslant -2
\end{cases}
$ \\
where $a\in \mathbb{C}$.
Conversely, the above types are all the graded post-Lie algebra structure satisfying (\ref{mainequ1}) on the Witt algebra $W$.  Furthermore, the post-Lie algebras $\mathcal{P}_3^a$, $\mathcal{P}_5$ and $\mathcal{P}_6$ are isomorphic to the post-Lie algebras $\mathcal{P}_4^a$, $\mathcal{P}_7$ and $\mathcal{P}_8$ respectively, and other post-Lie algebras are not mutually isomorphic.
\end{theorem}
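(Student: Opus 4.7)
By Lemmas \ref{ffmm} and \ref{IJ}, a graded post-Lie structure corresponds to a function $f:\mathbb{Z}\to\mathbb{C}$ with $f(m)\in\{0,-1\}$ for $m\neq 0$ and $f(0)=a\in\mathbb{C}$ free, subject to (\ref{f(mn)}). Setting $I=\{m\neq 0:f(m)=0\}$ and $J=\{m\neq 0:f(m)=-1\}$, a case split on $(f(m),f(n))\in\{0,-1\}^2$ in (\ref{f(mn)}) for $m\neq n$, $m,n\neq 0$ reduces the relation to: if $m+n\neq 0$, then $m,n\in I\Rightarrow m+n\in I$ and $m,n\in J\Rightarrow m+n\in J$ (the mixed case being unconstrained); and if $m+n=0$, then $m,-m\in I$ forces $a=0$ while $m,-m\in J$ forces $a=-1$. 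So classifying graded structures amounts to enumerating partitions $\mathbb{Z}\setminus\{0\}=I\sqcup J$ with both pieces closed under distinct-sum for nonzero sums, together with a compatible choice of $a$.

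The heart of the argument is a case analysis on the positive-side partition $(I_+,J_+)$ of $\mathbb{Z}_{>0}$. If both are nonempty, let $p=\min I_+$, $q=\min J_+$; WLOG $p<q$, so $\{1,\dots,q-1\}\subset I_+$. If $q\geq 3$, the distinct pair $1,q-1\in I_+$ forces $q\in I_+$, contradiction. Hence $q=2$, i.e., $1\in I_+$ and $2\in J_+$. A short subcase analysis on the images of $3$ and $4$ then narrows the positive-side options to $(\mathbb{Z}_{>0},\emptyset)$, $(\emptyset,\mathbb{Z}_{>0})$, $(\{1\},\mathbb{Z}_{\geq 2})$, $(\mathbb{Z}_{\geq 2},\{1\})$, plus two ``exotic'' candidates $(\{1,3,4,\dots\},\{2\})$ and $(\{2\},\{1,3,4,\dots\})$. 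For each positive-side option I would then determine the negative side using the symmetric argument on $\mathbb{Z}_{<0}$, the mixed-sign closure, and $a$-compatibility. The exotic partitions are eliminated because the decomposition $2=(-1)+3$ (respectively $3+(-1)=2$) forces an inconsistency between the required $I$- or $J$-membership of $\pm 1$ and $3$ and the value of $a$. The remaining four positive-side options extend uniquely (with $a$ free in the $\mathcal{P}_3^a,\mathcal{P}_4^a$ extensions and determined to be $0$ or $-1$ in the others), producing exactly the eight families $\mathcal{P}_1,\dots,\mathcal{P}_8$. The main obstacle is precisely this bookkeeping: each subcase is short, but there are several to track.

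Finally, the isomorphism assertions follow from Proposition \ref{iso1}: since $\tau(L_m)=-L_{-m}$ replaces $\phi(m,n)$ by $-\phi(-m,-n)$, the function $f$ is replaced by $m\mapsto f(-m)$, and direct inspection of the formulas shows this carries $\mathcal{P}_3^a,\mathcal{P}_5,\mathcal{P}_6$ to $\mathcal{P}_4^a,\mathcal{P}_7,\mathcal{P}_8$ respectively. To rule out further isomorphisms, one uses that any Lie automorphism of $W$ preserving the $\mathbb{Z}$-grading acts on indices by $n\mapsto\pm n$ (up to scaling), so the pair $(I,J)$ modulo this reflection is an isomorphism invariant that separates $\mathcal{P}_1,\mathcal{P}_2,\mathcal{P}_3^a,\mathcal{P}_5,\mathcal{P}_6$ from each other.
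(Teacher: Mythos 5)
Your strategy coincides with the paper's: reduce to the function $f$ via Lemmas \ref{ffmm} and \ref{IJ}, classify the admissible partitions $\mathbb{Z}\setminus\{0\}=I\sqcup J$, and settle the isomorphism statements with Proposition \ref{iso1} plus the known form of the automorphisms of $W$. You actually supply more of the combinatorics than the paper, which merely tabulates the eight outcomes from the values $f(\pm1),f(\pm2)$ and calls the discussion simple (pointing to \cite{Galb} for an alternative); your reduction of (\ref{f(mn)}) to the closure conditions $m,n\in I\Rightarrow m+n\in I$ and $m,n\in J\Rightarrow m+n\in J$ for $m+n\neq0$, together with the constraints $a=0$ (resp.\ $a=-1$) when $m,-m\in I$ (resp.\ $J$), is correct, and the elimination of the two exotic positive-side partitions via mixed-sign sums does work (e.g.\ $2=(-1)+3$ forces $-1\in J$, then $-1,2\in J$ forces $1\in J$, contradicting $1\in I$). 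However, the claim that ``the remaining four positive-side options extend uniquely'' is inconsistent with your own count of eight families: $(\mathbb{Z}_{>0},\emptyset)$ admits \emph{three} completions (to $\mathcal{P}_1$, $\mathcal{P}_4^a$, $\mathcal{P}_7$) and $(\emptyset,\mathbb{Z}_{>0})$ likewise three (to $\mathcal{P}_2$, $\mathcal{P}_3^a$, $\mathcal{P}_8$), while only $(\{1\},\mathbb{Z}_{\geqslant2})$ and $(\mathbb{Z}_{\geqslant2},\{1\})$ extend uniquely. The procedure you describe would find all of these, but as written one cannot tell that no completion has been dropped.

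The second, more substantive lapse is in the non-isomorphism argument. The invariant you propose --- the pair $(I,J)$ modulo reflection --- cannot separate $\mathcal{P}_3^{a_1}$ from $\mathcal{P}_3^{a_2}$ when $a_1\neq a_2$, since these have identical $I$ and $J$ and differ only in $f(0)$; yet the theorem's final clause asserts precisely that these are non-isomorphic. The paper handles this by quoting the full classification $\tau(L_m)=\epsilon c^mL_{\epsilon m}$ of automorphisms of $W$ from \cite{Dok} and evaluating on $L_0$ to conclude $a_1=a_2$. Relatedly, you restrict to automorphisms ``preserving the $\mathbb{Z}$-grading,'' but Definition \ref{iso} only requires a Lie automorphism of $(W,[,])$; you need to invoke the fact that \emph{every} automorphism of $W$ has the displayed form (hence preserves the grading up to the sign $\epsilon$) before the pair $(I,J)$ modulo reflection becomes a legitimate invariant.
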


\begin{proof}
Suppose that $(W, [,], \circ)$ is a post-Lie algebra structure satisfying (\ref{mainequ1}) on the Witt algebra $W$. By Lemma \ref{IJ}, there is a complex-valued function $f$ on $\mathbb{Z}$ such that (\ref{1121}) holds and it satisfies (i) and (ii) in Lemma \ref{IJ}. We discuss the cases of $f(1),f(-1),f(2)$ and $f(-2)$. Lemma \ref{IJ} (i) tells us that $f(1),f(-1),f(2),f(-2)\in \{-1,0\}$, and so that  $2^4=16$ cases can happen. By a simple discussion, it can be seen that the $8$ cases listed in Tabular 1 are true. We here should point out that Theorem 2.22 of \cite{Galb} given another method to prove it since they have the same condition as (\ref{f(mn)}). Thus, by Lemma \ref{ffmm}, the graded post-Lie algebra structure on the Witt algebra $W$  must be one of the above
$8$ types.  Conversely, every type of the $8$ cases means that there is a complex-valued function $f$ on $\mathbb{Z}$ such that (\ref{1121}) holds and, the conclusions (i) and (ii) of Lemma \ref{IJ} are easily verified.  Thus, they are all the graded post-Lie algebra structure on the Witt algebra $W$ by Lemma \ref{IJ}.

\begin{table}[!hbp]
\begin{tabular}{c|cccc|c|c}
\toprule[1pt]
\diagbox{cases}{$f(n)$}& $f(1)$ & $f(-1)$ & $f(2)$ & $f(-2)$ & $f(0)$ & $f(n)$ for all $n$ \\
\toprule[1pt]
$\mathcal{P}_{1}$ &$0$ & $0$ & $0$ & $0$ & $0$ &$f(\mathbb{Z})=0$  \\
\hline
$\mathcal{P}_{2}$& $-1$ &  $-1$ &  $-1$ &  $-1$ &  $-1$ & $f(\mathbb{Z})=-1$  \\
\hline
$\mathcal{P}_{3}^a$& $-1$ & $0$ & $-1$ & $0$ & some $a\in \mathbb{C}$ & $f(\mathbb{Z}_{> 0})=-1$, $f(\mathbb{Z}_{< 0})=0$ and $f(0)=a$.  \\
\hline
$\mathcal{P}_{4}^a$&$0$ &  $-1$ & $0$ &  $-1$ & some $a\in \mathbb{C}$ & $f(\mathbb{Z}_{> 0})=0$, $f(\mathbb{Z}_{< 0})=-1$ and $f(0)=a$  \\
\hline
$\mathcal{P}_{5}$ &$0$ & $0$ & $-1$ & $0$ & $0$ &$f(\mathbb{Z}_{\geqslant 2})=-1$ and $f(\mathbb{Z}_{\leqslant 1})=0$  \\
\hline
$\mathcal{P}_{6}$& $-1$ & $-1$ & $0$ & $-1$ &$-1$  &  $f(\mathbb{Z}_{\geqslant 2})=0$ and $f(\mathbb{Z}_{\leqslant 1})=-1$  \\
\hline
$\mathcal{P}_{7}$ &$0$ & $0$ & $0$ & $-1$ & $0$ &$f(\mathbb{Z}_{\geqslant -1})=0$ and $f(\mathbb{Z}_{\leqslant -2})=-1$  \\
\hline
$\mathcal{P}_{8}$& $-1$ & $-1$ & $-1$ & $0$ & $-1$ & $f(\mathbb{Z}_{\geqslant -1})=-1$ and $f(\mathbb{Z}_{\leqslant -2})=0$  \\
 \bottomrule[1pt]
 \end{tabular}
\caption{ volume of $f(n)$}
\end{table}

Finally, by Proposition \ref{iso1} we know that the post-Lie algebras $\mathcal{P}_3^a$, $\mathcal{P}_5$ and $\mathcal{P}_6$ are isomorphic to the post-Lie algebras $\mathcal{P}_4^a$, $\mathcal{P}_7$ and $\mathcal{P}_8$ respectively.
We claim that $\mathcal{P}_3^{a_1}$ is not isomorphic to $\mathcal{P}_3^{a_2}$ when $a_1\neq a_2$. If not, then there a linear bijective map $\tau: \mathcal{P}_3^{a_1}\rightarrow \mathcal{P}_3^{a_2}$ as the isomorphism of post-Lie algebras. According to Definition \ref{iso}, $\tau$ first is an automorphism of the Lie algebra $(W,[,])$.  By the automorphisms of the classical Witt algebra \cite{Dok}, $\tau(L_m) = \epsilon c^mL_{\epsilon m}$ for all $m\in \mathbb{Z}$, where $c\in \mathbb{C}$ with $c\neq 0$ and $\epsilon\in \{\pm 1\}$. This, together with the definitions of $\mathcal{P}_3^{a}$, yields that $a_1=a_2$. This proves the claim above. Similarly, we have $\mathcal{P}_4^{a_1}$ is not isomorphic to $\mathcal{P}_4^{a_2}$ when $a_1\neq a_2$.
Clearly, the other post-Lie algebras are not mutually isomorphic.  The proof is completed.
\end{proof}

Due to Theorem \ref{mmgg} and Proposition \ref{equ1}, we now are able to give some Lie algebras on the space with $\mathbb{C}$-basis
$
\{
L_i|i\in \mathbb{Z}
\},
$
in which many cases are new and interesting.

\begin{proposition}\label{lie1}
Up to isomorphism, the post-Lie algebras in Theorem \ref{mmgg} give rise to the following Lie algebras on the space with $\mathbb{C}$-basis
$
\{
L_i|i\in \mathbb{Z}
\},
$ and with the bracket $\{, \}$ defined by
of Proposition \ref{equ1}:

{\rm ($\mathcal{LP}_1$):}  $\{L_m, L_n\}_1=(m-n)L_{m+n}$ for all $m,n\in \mathbb{Z}$;

{\rm ($\mathcal{LP}_3^a$):}
$
\{L_m , L_n\}_3=
\begin{cases}
(n-m)L_{m+n},  & m,n> 0, \\
(m-n)L_{m+n}, & m,n<0, \\
naL_n, & m=0, n> 0,\\
n(a-1)L_n &  m=0, n<0,\\
0,& \text{otherwise\ (unless some cases for}\ n=0 )
\end{cases}
$

{\rm ($\mathcal{LP}_5$):}
$
\{L_m , L_n\}_5=
\begin{cases}
(n-m)L_{m+n},  & m,n\geqslant 2, \\
(m-n)L_{m+n}, & m,n\leqslant 1, \\
0,& \text{otherwise}
\end{cases}
$\\
\ \ \ \ where $a \in \mathbb{ C}$.
\end{proposition}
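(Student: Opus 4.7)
The plan is to pass through each of the eight post-Lie brackets from Theorem \ref{mmgg} and read off the induced bracket $\{L_m,L_n\}=\langle L_m,L_n\rangle+[L_m,L_n]$ prescribed by (\ref{laoz}), then use Proposition \ref{equ1} to collapse the list down to the three isomorphism classes displayed.

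First I would invoke Proposition \ref{equ1}: since isomorphic post-Lie algebras yield isomorphic induced Lie algebras, the isomorphisms $\mathcal{P}_3^a\cong\mathcal{P}_4^a$, $\mathcal{P}_5\cong\mathcal{P}_7$, and $\mathcal{P}_6\cong\mathcal{P}_8$ asserted at the end of Theorem \ref{mmgg} force $\mathcal{LP}_4^a\cong\mathcal{LP}_3^a$, $\mathcal{LP}_7\cong\mathcal{LP}_5$, and $\mathcal{LP}_8\cong\mathcal{LP}_6$. This cuts the list of eight down to the five candidates $\mathcal{LP}_1,\mathcal{LP}_2,\mathcal{LP}_3^a,\mathcal{LP}_5,\mathcal{LP}_6$.

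Next I would compute these five brackets directly. For $\mathcal{P}_1$ the product $\circ_1$ is identically zero, so $\{L_m,L_n\}_1=[L_m,L_n]=(m-n)L_{m+n}$, which is $\mathcal{LP}_1$. For $\mathcal{P}_2$ a short computation gives $\langle L_m,L_n\rangle_2=2(n-m)L_{m+n}$, hence $\{L_m,L_n\}_2=(n-m)L_{m+n}$; the map $L_m\mapsto -L_m$ intertwines this with $\{,\}_1$, yielding $\mathcal{LP}_2\cong\mathcal{LP}_1$. For $\mathcal{P}_3^a$ I would split into the cases $m,n>0$; $m,n<0$; $m>0,n<0$; $m=0,n>0$; $m=0,n<0$ (plus the symmetric counterparts). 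Substituting the piecewise definition of $\circ_3$ into $\{L_m,L_n\}_3=L_m\circ_3 L_n-L_n\circ_3 L_m+[L_m,L_n]$ reproduces each line of the formula for $\{,\}_3$; the $m=0$ rows are the subtlest because $L_0\circ_3 L_n=naL_n$ is asymmetric under $n\mapsto -n$, while $L_n\circ_3 L_0$ vanishes for $n<0$ but equals $-nL_n$ for $n>0$, producing $naL_n$ when $n>0$ and $n(a-1)L_n$ when $n<0$ after $[L_0,L_n]=-nL_n$ is added. The same case split handles $\mathcal{P}_5$ to yield the formula for $\{,\}_5$. For $\mathcal{P}_6$ an analogous computation gives a bracket that is the sign-swapped version of $\{,\}_5$, and the involution $L_m\mapsto-L_m$ then furnishes an isomorphism $\mathcal{LP}_6\cong\mathcal{LP}_5$.

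Jacobi and antisymmetry for each $\{,\}_i$ come for free from Proposition \ref{equ1}, so no further algebraic check is needed. The main obstacle, in my view, is keeping the case splits straight in $\mathcal{P}_3^a$, $\mathcal{P}_5$ and $\mathcal{P}_6$ across the boundary between the two sign sectors, where the contribution from $L_m\circ L_n$ must cancel against $[L_m,L_n]$ to produce the ``otherwise $=0$'' rows. I would verify this cancellation in a single representative boundary case (for instance $m\geqslant 2$, $n\leqslant 1$ in $\mathcal{P}_5$, where $\langle L_m,L_n\rangle_5=(n-m)L_{m+n}$ and $[L_m,L_n]=(m-n)L_{m+n}$ cancel) before trusting the same pattern across all the other boundary rows, and cross-check the $\mathcal{P}_3^a$ formulas at $a=0$ and $a=1$ to be sure the signs in $n(a\pm 1)$ are correct.
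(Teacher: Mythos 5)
Your proposal is correct and follows essentially the same route as the paper: reduce to the five representatives $\mathcal{P}_1,\mathcal{P}_2,\mathcal{P}_3^a,\mathcal{P}_5,\mathcal{P}_6$ via Theorem \ref{mmgg} and Proposition \ref{equ1}, compute $\{x,y\}=\langle x,y\rangle+[x,y]$ case by case, and identify $\mathcal{LP}_2\cong\mathcal{LP}_1$ and $\mathcal{LP}_6\cong\mathcal{LP}_5$ through $L_m\mapsto -L_m$. Your case computations (including the $m=0$ rows of $\mathcal{LP}_3^a$ and the boundary cancellations) check out against the stated formulas.
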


\begin{proof}  Theorem \ref{mmgg} tells us that, up to isomorphism, there are $5$ types of graded post-Lie algebra structures satisfying (\ref{mainequ1}) on the Witt algebra, that is $\mathcal{P}_{1}$, $\mathcal{P}_{2}$, $\mathcal{P}^a_{3}$, $\mathcal{P}_{5}$ and $\mathcal{P}_{6}$.
By Proposition \ref{equ1} and a simple computation, we can obtain the $5$ types of Lie algebras denoted by  $\mathcal{LP}_{1}$, $\mathcal{LP}_{2}$, $\mathcal{LP}^a_{3}$, $\mathcal{LP}_{5}$ and $\mathcal{LP}_{6}$, respectively.  It is easy to verify that the Lie algebras $\mathcal{LP}_1$, $\mathcal{LP}_5$  are isomorphic to the Lie algebras $\mathcal{LP}_2$, $\mathcal{LP}_6$ respectively through the linear transformation $L_m\rightarrow -L_m$.  The conclusions are easily deducible.
\end{proof}

\begin{remark}
Proposition \ref{lie1} tells us that, up to isomorphism, there are $3$ types of Lie algebras induced by graded post-Lie algebras from Proposition \ref{equ1}. We note that this fact does not agree with Proposition 6.4 in \cite{Galb}. In fact, it is easy to check that every Lie algebra of Proposition 6.4 in \cite{Galb} has missed the same item $[L_m,L_n]$ in the compuation $\{L_m,L_n\}=L_m\circ L_n-L_n\circ L_m +[L_m,L_n]$. After correcting the small error, one can get the same results.
\end{remark}

\section{A class of shifting post-Lie algebra structures On the Witt algebra}\label{nograd}

In this section, we mainly
consider a class of non-graded post-Lie algebra structures on the Witt algebra $(W, [,])$. Namely, we assume that it satisfies
\begin{equation}\label{mainequ11}
L_m\circ L_n=\phi(m,n)L_{m+n}+\varrho(m,n)L_{m+n+\nu},\;\;\forall m,n\in \mathbb{Z},
 \end{equation}
where $\phi$ and $\varrho$ are complex-valued functions on $\mathbb{Z}\times \mathbb{Z}$ with $\varrho\neq 0$,  and $\nu$ is a fixed nonzero integer.
The motivation of studying this class of non-graded post-Lie algebra structures is inspired by \cite{tangb2012}, in which a class of non-graded
 left-symmetric algebraic structures on the Witt algebra has been considered. As in the theory of groups and in the theory of graded Lie algebras, we
 shall call such non-graded post-Lie algebra to be shifting post-Lie algebra since it with a shifting item. Our results show that the characterization of all non-graded post-Lie algebra structures on Witt algebra seems difficult.

\begin{lemma}\label{ffmm11}
There exists a shifting post-Lie algebra structure on the Witt algebra $(W, [,])$ satisfying (\ref{mainequ11}) if and only if
there are complex-valued functions $f$ and $g$ on $\mathbb{Z}$ such that
\begin{eqnarray}
&& \phi(m,n)=(m-n)f(m), \label{11211} \\
&& \varrho(m,n)=(m-n+\nu)g(m), \label{11222}\\
&& (m-n)(f(m+n)+f(m)f(m+n)+f(n)f(m+n)-f(m)f(n))=0, \label{11223}\\
&& (m-n)g(m)g(n)=((m-n+\nu)g(m)+(m-n-\nu)g(n))g(m+n+\nu),\label{jiaoj}\\
&& (m-n)(f(m)+f(n)+1)g(m+n)\nonumber\\
&=&(n-m+\nu)(f(m+n+\nu)-f(m))g(n)+(n-m-\nu)(f(m+n+\nu)-f(n))g(m). \label{f(mn1)}
\end{eqnarray}
\end{lemma}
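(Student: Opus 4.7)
The plan is to follow the strategy of Lemma \ref{ffmm}, using Remark \ref{remark1} (each left multiplication $\mathcal{L}(L_m)$ is a derivation of $(W,[,])$) together with the classical fact that every derivation of the Witt algebra is inner \cite{zhulin}. The ``if'' direction will then be a direct substitution: given $f,g$ satisfying (\ref{11211})--(\ref{f(mn1)}) and defining $\circ$ via (\ref{mainequ11}), the operator $\mathcal{L}(L_m) = \mathrm{ad}(f(m)L_m + g(m)L_{m+\nu})$ is automatically a derivation, so (\ref{post2}) holds; and (\ref{post1}) follows by expanding both sides using (\ref{mainequ11}) and matching the coefficients of $L_{m+n+p}$, $L_{m+n+p+\nu}$, $L_{m+n+p+2\nu}$ against (\ref{11223}), (\ref{f(mn1)}), (\ref{jiaoj}) respectively.

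For the ``only if'' direction I would first write $L_m \circ L_n = [\psi(L_m), L_n]$ for some linear $\psi\colon W \to W$ and expand $\psi(L_m) = \sum_{i\in\mathbb{Z}} k_i^{(m)} L_i$, giving $L_m \circ L_n = \sum_{i\in\mathbb{Z}} (i-n)k_i^{(m)} L_{n+i}$. Matching this against the shifting ansatz (\ref{mainequ11}) forces $(i-n)k_i^{(m)} = 0$ for every $i \notin \{m, m+\nu\}$ and every $n \in \mathbb{Z}$; since $\nu \neq 0$ these two indices are distinct, so $k_i^{(m)} = 0$ off the set $\{m, m+\nu\}$. Setting $f(m) := k_m^{(m)}$ and $g(m) := k_{m+\nu}^{(m)}$ then yields (\ref{11211}) and (\ref{11222}) at once.

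The three remaining identities arise from imposing the post-Lie axiom (\ref{post1}) on the triple $(L_m, L_n, L_p)$. Substituting (\ref{mainequ11}) into the rearranged identity $[L_m,L_n]\circ L_p - L_m\circ(L_n\circ L_p) + L_n\circ(L_m\circ L_p) + \langle L_m,L_n\rangle\circ L_p = 0$, and using $\langle L_m,L_n\rangle = L_m\circ L_n - L_n\circ L_m$, produces a linear combination of the three distinct basis vectors $L_{m+n+p}$, $L_{m+n+p+\nu}$, $L_{m+n+p+2\nu}$. The $L_{m+n+p}$-coefficient involves only $f$ and, after extracting the common $p$-factor, reduces to (\ref{11223}) (essentially the content of Lemma \ref{ffmm}); the $L_{m+n+p+2\nu}$-coefficient involves only $g$ and reduces, after the common factor $(m+n-p+2\nu)$ cancels on both sides, to (\ref{jiaoj}); the mixed $L_{m+n+p+\nu}$-coefficient reduces to (\ref{f(mn1)}).

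The main technical obstacle will be the bookkeeping for the middle coefficient that yields (\ref{f(mn1)}): several cross-contributions appear simultaneously, namely $\phi$ meeting $\varrho$ in $L_m\circ(L_n\circ L_p)$, the analogous contribution from $L_n\circ(L_m\circ L_p)$, the $\varrho$-part of $[L_m,L_n]\circ L_p$, and the $\phi$-part of $\langle L_m,L_n\rangle\circ L_p$. One must collect these using (\ref{11211})--(\ref{11222}), factor out the common $p$-dependence, and regroup the remainder around the shifted indices $n-m\pm\nu$ and the factor $f(m+n+\nu)$ to recover the symmetric form (\ref{f(mn1)}). A convenient specialization such as $p=0$ provides a useful sanity check on the algebra throughout.
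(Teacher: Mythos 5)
Your proposal is correct and follows essentially the same route as the paper, which proves this lemma by simply invoking ``a similar method to Lemma \ref{ffmm}'': inner derivations force $\psi(L_m)$ to be supported on $\{L_m, L_{m+\nu}\}$, giving (\ref{11211})--(\ref{11222}), and the three remaining identities come from matching the coefficients of $L_{m+n+p}$, $L_{m+n+p+\nu}$, $L_{m+n+p+2\nu}$ in axiom (\ref{post1}). Your version is in fact more detailed than the paper's one-line proof, and the key computational claims (e.g.\ the common factor $(m+n-p+2\nu)$ in the top coefficient) check out.
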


\begin{proof}
We can finish the proof by a similar method to Lemma \ref{ffmm}.
\end{proof}

Let $(\mathcal{P}(\phi_i,\varrho_i,\nu_i), \circ_i), i=1,2$ be two algebras on the Witt algebra $W=\mathcal{P}(\phi_i,\varrho_i,\nu_i)$ equipped with $k$-bilinear operations $x\circ_i y$ such that
$L_m\circ_i L_n=\phi_i(m,n)L_{m+n}+\varrho_i(m,n)L_{m+n+\nu_i}$ for all $m,n\in \mathbb{Z}$, where  $\phi_i,\varrho_i, i=1,2$ are complex-valued functions on $\mathbb{Z}\times \mathbb{Z}$ and $\nu_i$ are nonzero integers. Furthermore, let $\tau: \mathcal{P}(\phi_1,\varrho_i,\nu_1)\rightarrow \mathcal{P}(\phi_2,\varrho_2,\nu_2)$ be a linear map given by $\tau(L_m)=-L_{-m}$ for all $m\in \mathbb{Z}$.

\begin{proposition}\label{iso2}
Let $\mathcal{P}(\phi_i,\varrho_i,\nu_i), i=1,2$ be two algebras and $\tau: \mathcal{P}(\phi_1,\varrho_i,\nu_1)\rightarrow \mathcal{P}(\phi_2,\varrho_2,\nu_2)$ be a map defined as above. Suppose that $\mathcal{P}(\phi_1,\varrho_1,\nu_1)$  is a post-Lie algebra. Then $\mathcal{P}(\phi_2, \varrho_2,\nu_2 )$ is a post-Lie algebra and $\tau$ is an isomorphism from $\mathcal{P}(\phi_1, \varrho_1,\nu_1)$ to $\mathcal{P}(\phi_2, \varrho_2,\nu_2 )$ if and only if
\begin{equation}\label{weiwei1}
\phi_2(m,n)=-\phi_1(-m,-n), \ \ \varrho_2 (m,n)=-\varrho_1(-m,-n), \ \ \nu_1=-\nu_2.
\end{equation}
\end{proposition}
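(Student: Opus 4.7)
The plan is to mirror the structure of the proof of Proposition \ref{iso1}, treating the two monomials $L_{m+n}$ and $L_{m+n+\nu}$ as independent under the grading.

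For the forward direction, assume $\mathcal{P}(\phi_2,\varrho_2,\nu_2)$ is post-Lie and $\tau$ is an isomorphism. I would compute
\[
\tau(L_m\circ_1 L_n)=-\phi_1(m,n)L_{-(m+n)}-\varrho_1(m,n)L_{-(m+n+\nu_1)},
\]
and
\[
\tau(L_m)\circ_2\tau(L_n)=L_{-m}\circ_2 L_{-n}=\phi_2(-m,-n)L_{-(m+n)}+\varrho_2(-m,-n)L_{-(m+n)+\nu_2}.
\]
Since $\varrho_1\neq 0$ by hypothesis, some $(m,n)$ makes $\varrho_1(m,n)\neq 0$; comparing the shifted index on both sides forces $-(m+n+\nu_1)=-(m+n)+\nu_2$, i.e.\ $\nu_2=-\nu_1$. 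Once that is settled, the two monomials $L_{-(m+n)}$ and $L_{-(m+n+\nu_1)}$ are linearly independent (the only obstruction, $\nu_1=0$, is excluded), so matching coefficients yields $\phi_2(m,n)=-\phi_1(-m,-n)$ and $\varrho_2(m,n)=-\varrho_1(-m,-n)$, which is \eqref{weiwei1}.

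For the converse I assume \eqref{weiwei1} and exploit Lemma \ref{ffmm11}: there exist $f_1,g_1:\mathbb{Z}\to\mathbb{C}$ realising $\phi_1,\varrho_1$. I set $f_2(m)=f_1(-m)$ and $g_2(m)=g_1(-m)$. A direct substitution shows
\[
\phi_2(m,n)=-\phi_1(-m,-n)=(m-n)f_2(m),\qquad
\varrho_2(m,n)=-\varrho_1(-m,-n)=(m-n+\nu_2)g_2(m),
\]
since $-(n-m+\nu_1)=m-n-\nu_1=m-n+\nu_2$. Then each of the three compatibility equations \eqref{11223}, \eqref{jiaoj}, \eqref{f(mn1)} for $(f_2,g_2,\nu_2)$ can be pulled back to the corresponding equation for $(f_1,g_1,\nu_1)$ under the substitution $(m,n)\mapsto(-m,-n)$; the overall sign changes and the swap $\nu_1\leftrightarrow -\nu_2$ cancel out term by term. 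Lemma \ref{ffmm11} then certifies that $\mathcal{P}(\phi_2,\varrho_2,\nu_2)$ is a post-Lie algebra. Finally $\tau$ is manifestly a Lie automorphism of $(W,[,])$, and the identity $\tau(L_m\circ_1 L_n)=\tau(L_m)\circ_2\tau(L_n)$ follows from \eqref{weiwei1} by the same calculation used in the forward direction read backwards.

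I expect the main obstacle to be the bookkeeping for equation \eqref{f(mn1)}, which mixes $f$ and $g$ and involves both $\nu$ and $-\nu$ shifts simultaneously; one has to verify carefully that the factors $(n-m+\nu_2)$ and $(n-m-\nu_2)$ become $(n-m-\nu_1)$ and $(n-m+\nu_1)$ under the change of variable and that the corresponding $g$-arguments pair up correctly. The sign tracking in \eqref{jiaoj} is analogous but shorter. Everything else is routine substitution.
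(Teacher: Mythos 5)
Your proposal is correct and follows exactly the route the paper intends: the paper's own proof of Proposition \ref{iso2} is simply the statement that it is ``similar to Proposition \ref{iso1},'' and your argument is precisely that analogue, with the forward direction matching coefficients of the two independent monomials to force $\nu_2=-\nu_1$ and the converse pulling back the three compatibility equations of Lemma \ref{ffmm11} under $(m,n)\mapsto(-m,-n)$ with $f_2(m)=f_1(-m)$, $g_2(m)=g_1(-m)$. The sign and index bookkeeping you flag as the main obstacle does check out term by term, so nothing is missing.
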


\begin{proof}
The proof is similar to Proposition \ref{iso1}.
\end{proof}

Since a shifting post-Lie algebra structure on the Witt algebra $(W, [,])$ satisfying (\ref{mainequ11}) are entirely determined by
an integer number $\nu$ and two complex-valued functions $\phi$ and $\varrho$ on $\mathbb{Z}\times \mathbb{Z}$. According to (\ref{11211}) and (\ref{11222}) in Lemma \ref{ffmm11}, the classicization of such post-Lie algebras is dependent on the integer $\nu$ and two complex-valued functions $f$ and $g$ on $\mathbb{Z}$. It will be proved that the following Table 2 gives  all cases of $\nu, f$ and $g$, where $b$ is any nonzero complex number.

\begin{table}[!hbp]
\begin{tabular}{c|c|c|c|c|c}
\toprule[1pt]
& $f(\cdot)$& $f(\cdot)$ & $g(\cdot)$ & $g(\cdot)$ & $\nu$\\
\toprule[1pt]
$\mathcal{NP}^{b,\nu}_1$ &$f(\mathbb{Z}_{\geqslant 0})=-1$ & $f(\mathbb{Z}_{<0})=0$ & $g(-\nu)=-b$ & $g(\mathbb{Z}\setminus \{-\nu\})=0$ & $1, 2$ \\
\hline
$\mathcal{NP}^{b,\nu}_2$ &$f(\mathbb{Z}_{> 0})=-1$ & $f(\mathbb{Z}_{\leqslant 0})=0$ & $g(-\nu)=-b$ & $g(\mathbb{Z}\setminus \{-\nu\})=0$ & $-1, -2$ \\
\hline
$\mathcal{NP}^{b,\nu}_3$ &$f(\mathbb{Z}_{\geqslant 2})=-1$ & $f(\mathbb{Z}_{\leqslant 1})=0$ & $g(-\nu)=-b$ & $g(\mathbb{Z}\setminus \{-\nu\})=0$ & $-2, -3,-4$ \\
\hline
$\mathcal{NP}^{b,\nu}_4$ &$f(\mathbb{Z}_{\geqslant 2})=-1$ & $f(\mathbb{Z}_{\leqslant 1})=0$  & $g(2)=-b$ & $g(\mathbb{Z}\setminus \{2\})=0$ & $-1$ \\
\hline
$\mathcal{NP}^{b,\nu}_5$ &$f(\mathbb{Z}_{\geqslant 2})=-1$ & $f(\mathbb{Z}_{\leqslant 1})=0$ & $g(3)=2g(2)=-2b$ & $g(\mathbb{Z}\setminus \{2,3\})=0$ & $-2$ \\
\hline
$\mathcal{NP}^{b,\nu}_6$ &$f(\mathbb{Z}_{\geqslant 2})=0$ & $f(\mathbb{Z}_{\leqslant 1})=-1$ & $g(-\nu)=-b$ & $g(\mathbb{Z}\setminus \{-\nu\})=0$ & $-2, -3,-4$ \\
\hline
$\mathcal{NP}^{b,\nu}_7$ &$f(\mathbb{Z}_{\geqslant 2})=0$ & $f(\mathbb{Z}_{\leqslant 1})=-1$  & $g(2)=-b$ & $g(\mathbb{Z}\setminus \{2\})=0$ & $-1$ \\
\hline
$\mathcal{NP}^{b,\nu}_8$ &$f(\mathbb{Z}_{\geqslant 2})=0$ & $f(\mathbb{Z}_{\leqslant 1})=-1$  & $g(3)=2g(2)=-2b$ & $g(\mathbb{Z}\setminus \{2,3\})=0$ & $-2$ \\
\hline
$\mathcal{MP}^{b,\nu}_1$ &$f(\mathbb{Z}_{\leqslant 0})=-1$ & $f(\mathbb{Z}_{>0})=0$ & $g(-\nu)=-b$ & $g(\mathbb{Z}\setminus \{-\nu\})=0$ & $-1, -2$ \\
\hline
$\mathcal{MP}^{b,\nu}_2$ &$f(\mathbb{Z}_{< 0})=-1$ & $f(\mathbb{Z}_{\geqslant 0})=0$ & $g(-\nu)=-b$ & $g(\mathbb{Z}\setminus \{-\nu\})=0$ & $1, 2$ \\
\hline
$\mathcal{MP}^{b,\nu}_3$ &$f(\mathbb{Z}_{\leqslant -2})=-1$ & $f(\mathbb{Z}_{\geqslant -1})=0$ & $g(-\nu)=-b$ & $g(\mathbb{Z}\setminus \{-\nu\})=0$ & $2, 3,4$ \\
\hline
$\mathcal{MP}^{b,\nu}_4$ &$f(\mathbb{Z}_{\leqslant -2})=-1$ & $f(\mathbb{Z}_{\geqslant -1})=0$  & $g(-2)=-b$ & $g(\mathbb{Z}\setminus \{-2\})=0$ & $1$ \\
\hline
$\mathcal{MP}^{b,\nu}_5$ &$f(\mathbb{Z}_{\leqslant -2})=-1$ & $f(\mathbb{Z}_{\geqslant -1})=0$ & $g(-3)=2g(-2)=-2b$ & $g(\mathbb{Z}\setminus \{-2,-3\})=0$ & $2$ \\
\hline
$\mathcal{MP}^{b,\nu}_6$ &$f(\mathbb{Z}_{\leqslant -2})=0$ & $f(\mathbb{Z}_{\geqslant -1})=-1$ & $g(-\nu)=b$ & $g(\mathbb{Z}\setminus \{-\nu\})=0$ & $2, 3,4$ \\
\hline
$\mathcal{MP}^{b,\nu}_7$ &$f(\mathbb{Z}_{\leqslant -2})=0$ & $f(\mathbb{Z}_{\geqslant -1})=-1$  & $g(-2)=-b$ & $g(\mathbb{Z}\setminus \{-2\})=0$ & $1$ \\
\hline
$\mathcal{MP}^{b,\nu}_8$ &$f(\mathbb{Z}_{\leqslant -2})=0$ & $f(\mathbb{Z}_{\geqslant -1})=-1$  & $g(-3)=2g(-2)=-2b$ & $g(\mathbb{Z}\setminus \{-2,-3\})=0$ & $2$ \\
 \bottomrule[1pt]
 \end{tabular}
\caption{ volumes of $f(n), g(n)$ and $\nu$}
\end{table}

The above conclusion will be proved by some propositions as follows. First, notice that (\ref{11211}) and (\ref{jiaoj}) in Lemma \ref{ffmm11},  from Lemma \ref{ffmm} and Theorem \ref{mmgg} we have the following lemma.

\begin{lemma}
 Suppose that $(W, [,], \circ)$ is a shifting post-Lie algebra structure on the Witt algebra $(W, [,])$ satisfying (\ref{mainequ11}). Then $\phi$ satisfies (\ref{1121}) and $f$ must be determined by one of the cases $\mathcal{P}_1,\mathcal{P}_2$, $\mathcal{P}_3^a,\mathcal{P}_4^a$, $\mathcal{P}_4$-$\mathcal{P}_8$ in Tabel 1.
\end{lemma}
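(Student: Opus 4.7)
The plan is to observe that this lemma is essentially a corollary of the work already done for the graded case, because the functional equation that $f$ must satisfy in the shifting setting is formally identical to the one in the graded setting; the shifting data $g$ and $\nu$ simply do not appear in the equation that governs $f$.

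First I would apply Lemma \ref{ffmm11} to the given shifting post-Lie structure $\circ$ on $W$. This immediately produces complex-valued functions $f$ and $g$ on $\mathbb{Z}$ such that $\phi(m,n)=(m-n)f(m)$, which is exactly (\ref{1121}), proving the first half of the assertion. At the same time, equation (\ref{11223}) of Lemma \ref{ffmm11} reads
\begin{equation*}
(m-n)\bigl(f(m+n)+f(m)f(m+n)+f(n)f(m+n)-f(m)f(n)\bigr)=0
\end{equation*}
for all $m,n\in\mathbb{Z}$, which coincides verbatim with (\ref{f(mn)}) of Lemma \ref{ffmm}.

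Next I would invoke the analysis of the graded case. Lemma \ref{IJ} extracts from (\ref{f(mn)}) alone (together with (\ref{1121})) the constraints that $f(m)\in\{0,-1\}$ for every $m\neq 0$ (by setting $n=0$) and the additive conditions (i) and (ii) on the sets $I=f^{-1}(0)$ and $J=f^{-1}(-1)$. The subsequent case analysis in the proof of Theorem \ref{mmgg}, carried out according to the values of $f(\pm 1)$ and $f(\pm 2)$, shows that these constraints are realized by exactly the eight profiles $\mathcal{P}_1,\mathcal{P}_2,\mathcal{P}_3^a,\mathcal{P}_4^a,\mathcal{P}_5,\ldots,\mathcal{P}_8$ tabulated in Table 1. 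Since neither $g$ nor $\nu$ enters any of these arguments, the same classification applies without modification here, yielding the conclusion.

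There is no real obstacle to this particular lemma; it is a clean transfer of the graded classification, made possible by the separation of the $f$-equation (\ref{11223}) from the mixed equations (\ref{jiaoj}) and (\ref{f(mn1)}). The genuinely new work, namely determining which of the eight profiles of $f$ are actually compatible with a nonzero $g$ and which integers $\nu$ are admissible, is deferred to the subsequent propositions and will be read off from the remaining constraints (\ref{11222}), (\ref{jiaoj}) and (\ref{f(mn1)}), ultimately producing Table 2.
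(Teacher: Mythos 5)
Your proposal is correct and follows essentially the same route as the paper, which derives the lemma directly from Lemma \ref{ffmm11} together with Lemma \ref{ffmm} and Theorem \ref{mmgg}, precisely because (\ref{11211}) and (\ref{11223}) coincide with (\ref{1121}) and (\ref{f(mn)}) and involve neither $g$ nor $\nu$. (You even correct a small slip in the paper's text, which cites (\ref{jiaoj}) where the relevant equation is (\ref{11223}).)
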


 Taking $n=-\nu$ in (\ref{jiaoj}), the following equation  is often used in our proof.
\begin{equation}\label{wupp3}
\nu g(m)g(-\nu)=(m+2\nu)g(m)g(m).
\end{equation}

\begin{proposition}\label{jin}
If $f$ takes the form determined by $\mathcal{P}_1$ or $\mathcal{P}_2$ in Tabel 1, then $g(\mathbb{Z})=0$. In this case, there is no any shifting
post-Lie algebra structure on the Witt algebra $(W, [,])$ satisfying (\ref{mainequ11}).
\end{proposition}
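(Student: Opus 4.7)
The plan is to substitute the constant forms of $f$ coming from $\mathcal{P}_1$ and $\mathcal{P}_2$ into the compatibility equation (\ref{f(mn1)}) and read off that $g$ is forced to vanish.

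First, consider case $\mathcal{P}_1$, where $f(m)=0$ for every $m\in\mathbb{Z}$. Then $f(m+n+\nu)-f(m)=0$ and $f(m+n+\nu)-f(n)=0$, so the right-hand side of (\ref{f(mn1)}) is identically zero, while $f(m)+f(n)+1=1$. Hence (\ref{f(mn1)}) collapses to
\begin{equation*}
(m-n)\,g(m+n)=0\qquad \text{for all }m,n\in\mathbb{Z}.
\end{equation*}
For any $k\in\mathbb{Z}$ choose $m,n$ with $m+n=k$ and $m\neq n$ (e.g.\ $m=0,n=k$ when $k\neq 0$, and $m=1,n=-1$ when $k=0$); this forces $g(k)=0$. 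Therefore $g\equiv 0$, so $\varrho\equiv 0$ by (\ref{11222}), contradicting the standing assumption $\varrho\neq 0$.

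For case $\mathcal{P}_2$, where $f(m)=-1$ for every $m\in\mathbb{Z}$, the computation is essentially identical. Again $f(m+n+\nu)-f(m)=0$ and $f(m+n+\nu)-f(n)=0$ annihilate the right-hand side of (\ref{f(mn1)}), while $f(m)+f(n)+1=-1$. We obtain
\begin{equation*}
-(m-n)\,g(m+n)=0\qquad \text{for all }m,n\in\mathbb{Z},
\end{equation*}
and the same choice of $m,n$ as above yields $g\equiv 0$, contradicting $\varrho\neq 0$.

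There is no real obstacle here: once one notices that the two cases $\mathcal{P}_1,\mathcal{P}_2$ are precisely the cases in which $f$ is constant on $\mathbb{Z}$, equation (\ref{f(mn1)}) degenerates to a scalar multiple of $(m-n)g(m+n)=0$ and kills $g$. Equations (\ref{jiaoj}) and (\ref{wupp3}) are then automatically satisfied and need not be invoked. The only mildly careful point is the elementary observation that every integer $k$ is a sum $m+n$ with $m\neq n$, which is what promotes the vanishing on $m\neq n$ to vanishing of all of $g$.
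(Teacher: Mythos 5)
Your proposal is correct and follows essentially the same route as the paper: substitute the constant values $f\equiv 0$ (resp.\ $f\equiv -1$) into equation (\ref{f(mn1)}), observe that it reduces to $(m-n)g(m+n)=0$, and conclude $g\equiv 0$, which contradicts $\varrho\neq 0$. The only difference is that you spell out the choice of $m\neq n$ with $m+n=k$, which the paper leaves implicit.
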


\begin{proof}
If $f$ takes the form determined by $\mathcal{P}_1$,  then $f(m)=0$ for all $m\in \mathbb{Z}$. Thus, by (\ref{f(mn1)}) we have $(m-n)g(m+n)=0$.
From this, we deduce that $g(\mathbb{Z})=0$. The case in which $f$ takes the form determined by $\mathcal{P}_2$ is similar.
\end{proof}

\begin{proposition}\label{bdd}
Suppose that $f$ takes the form determined by $\mathcal{P}_3^a$ in Table 1, i.e.,  $f(\mathbb{Z}_{>0})=-1, f(\mathbb{Z}_{<0})=0$ and $f(0)=a$ for some $a\in \mathbb{C}$. Then the shifting post-Lie algebra structure on the Witt algebra $(W, [,])$ satisfying (\ref{mainequ11}) is determined by $\mathcal{NP}^{b,\nu}_1$ or
$\mathcal{NP}^{b,\nu}_2$ in Table 2.
\end{proposition}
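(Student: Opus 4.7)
The plan is to start from the three functional equations of Lemma \ref{ffmm11} that still contain information once $f$ is fixed in the form $\mathcal{P}_3^a$: equations (\ref{11223}), (\ref{jiaoj}), and (\ref{f(mn1)}). Now (\ref{11223}) is identical to (\ref{f(mn)}) in Lemma \ref{ffmm}, which the function $f$ of $\mathcal{P}_3^a$ is already known to satisfy (it is one of the eight patterns in Theorem \ref{mmgg}), so it contributes nothing new. The task therefore reduces to extracting $g$ and $\nu$ from (\ref{jiaoj}) and (\ref{f(mn1)}).

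First I would use (\ref{wupp3}), i.e.\ the $n=-\nu$ specialization of (\ref{jiaoj}): for every $m\in\mathbb{Z}$, either $g(m)=0$ or $g(m)=\nu g(-\nu)/(m+2\nu)$. In particular $g(-\nu)$ is a free parameter which I rename as $-b$. Next I would plug into (\ref{f(mn1)}) pairs $(m,n)$ with $m,n\in\mathbb{Z}_{>0}$, $m\neq n$: since $f(m)=f(n)=-1$, the identity collapses to
\[
-(m-n)\,g(m+n)=(f(m+n+\nu)+1)\bigl[(n-m+\nu)g(n)+(n-m-\nu)g(m)\bigr].
\]
Whenever $m+n+\nu>0$ the factor $f(m+n+\nu)+1$ vanishes, forcing $g(m+n)=0$; letting $m,n$ run with $m+n$ large pushes $g$ to zero on a cofinite subset of $\mathbb{Z}_{>0}$. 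A symmetric argument with $m,n<0$ (where $f(m)=f(n)=0$ makes the LHS equal to $(m-n)g(m+n)$ and the RHS acquires the factor $f(m+n+\nu)$) kills $g$ on a cofinite subset of $\mathbb{Z}_{<0}$, so $g$ has finite support.

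I would then iterate the same two identities near the boundary indices to shrink this support to $\{-\nu\}$, constantly re-using the formula $g(m)=\nu g(-\nu)/(m+2\nu)$ from the first step to rule out extra nonzero values; this simultaneously restricts $\nu$ to a small finite list, because only a few choices of $\nu$ keep the shifted index $m+n+\nu$ in the regime in which the RHS of (\ref{f(mn1)}) cancels the expected LHS. Finally I would test (\ref{f(mn1)}) at either $m=0$ or $n=0$, where the datum $f(0)=a$ enters: the resulting linear relation in $a$ has only the roots $a=0$ and $a=-1$, and the admissible sign of $\nu$ is dictated by which alternative occurs. This separates the solution space exactly into the families $\mathcal{NP}_1^{b,\nu}$ (with $a=-1$, $\nu\in\{1,2\}$) and $\mathcal{NP}_2^{b,\nu}$ (with $a=0$, $\nu\in\{-1,-2\}$) of Table 2.

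The main obstacle I anticipate is the combinatorial bookkeeping in the middle step. Because $f$ is defined piecewise on $\mathbb{Z}_{>0}$, $\{0\}$, and $\mathbb{Z}_{<0}$, every application of (\ref{f(mn1)}) forks into numerous sub-cases according to the signs of the four indices $m$, $n$, $m+n$, and $m+n+\nu$; the truly resistant cases are the boundary ones in which the shift $\nu$ is small enough that $m+n+\nu$ can straddle the sign-change of $f$. Carefully ruling these out (so that the support of $g$ actually collapses to the single point $\{-\nu\}$ rather than to a two- or three-element set) is the technical heart of the argument, while the final step that pins down $a\in\{-1,0\}$ is comparatively routine.
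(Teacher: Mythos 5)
Your outline coincides with the paper's own argument: the paper likewise first derives $(m-n)g(m+n)=0$ from (\ref{f(mn1)}) whenever $\{m,n,m+n+\nu\}$ lies entirely in $\mathbb{Z}_{>0}$ or entirely in $\mathbb{Z}_{<0}$ (its Assertion \ref{weib11}, giving finite support of $g$), then combines the $n=-\nu$ specialization (\ref{wupp3}) of (\ref{jiaoj}) with boundary substitutions to show $g\equiv 0$ unless $\nu\in\{1,2,-1,-2\}$ and to collapse the support of $g$ to $\{-\nu\}$, and finally obtains $(f(0)+1)g(-\nu)=0$ from an $n=0$ substitution, forcing $a=-1$ when $\nu>0$ (type $\mathcal{NP}^{b,\nu}_1$) and $a=0$ when $\nu<0$ (type $\mathcal{NP}^{b,\nu}_2$). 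The only caveat is that the step you yourself flag as the technical heart --- ruling out $|\nu|\geqslant 3$ and shrinking the support to the single point $-\nu$ in each surviving case --- is merely announced in your proposal, whereas it constitutes the bulk of the paper's proof (Assertions \ref{wangy1} and \ref{pro21}, carried out by explicit chains of substitutions such as $m=-2\nu+1$, $n=-1$ in (\ref{jiaoj}) and an induction on $g(-1-k)$ via $m=-1$, $n=-k$ in (\ref{f(mn1)})), so your text is a correct outline of the same route rather than a complete proof.
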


\begin{proof}
The proof is divided into the following: Assertions \ref{weib11}, \ref{wangy1}, \ref{pro21}.

\begin{assertion}\label{weib11}
{\rm (i)} When $\nu>0$, we have $g(\mathbb{Z}_{\geqslant 3})=0$ and $g(\mathbb{Z}_{\leqslant \min\{-3, -1-\nu\}})=0$;

{\rm (ii)} When $\nu<0$, we have $g(\mathbb{Z}_{\geqslant \max\{3, 1-\nu\}})=g(\mathbb{Z}_{\leqslant -3})=0$.
\end{assertion}

For all $m,n\in \mathbb{Z}$ such that $\{m,n, m+n+\nu\} \subset \mathbb{Z}_{>0}$ or
$\{m,n, m+n+\nu\} \subset \mathbb{Z}_{<0}$, by (\ref{f(mn1)}), it follows that $(m-n)g(m+n)=0$.
The results are easy to check.

\begin{assertion} \label{wangy1}
 If $\nu \geqslant 3 $ or $\nu \leqslant -3$, then $g(\mathbb{Z})=0$.
\end{assertion}

Case I. $\nu \geqslant 3$. By Assertion \ref{weib11} (i), $g(\mathbb{Z}_{\geqslant 3})=g(\mathbb{Z}_{\leqslant -1-\nu})=0$. Let $m=-n=1$ and $m=-n=2$ in (\ref{f(mn1)}), respectively, one can deduce $g(1)=g(2)=0$. Note that $-2\nu<-1-\nu$, one has $g(-2\nu)=0$. This, together with $m=-2\nu$ and $n=0$ in (\ref{jiaoj}), gives $0=-3\nu g(0)g(-\nu)$, so that $g(0)g(-\nu)=0$. From this, by letting $m=-\nu$ and $n=0$ in (\ref{jiaoj}), we obtain $-2\nu g(0)g(0)=0$. In other words, $g(0)=0$. In order to prove that $g(\mathbb{Z})=0$, it is enough to show that $g(m)=0$ for all $m\in \{-1,-2, \cdots, -\nu\}$.
Because of $-2\nu+1<-\nu-1$ and $-2\nu+2<-\nu-1$, one has $g(-2\nu+1)=g(-2\nu+2)=0$. This, together with $m=-2\nu+1, n=-1$ and $m=-2\nu+2, n=-2$ in (\ref{jiaoj}), respectively, gives
$$
(-3\nu+2)g(-1)g(-\nu)=(-3\nu+4)g(-2)g(-\nu)=0.
$$
As $\nu \geqslant 3$, we see that $-3\nu+2\neq 0$ and $-3\nu+4\neq 0$. This, together with the above equation, yields that $g(-1)g(-\nu)=g(-2)g(-\nu)=0$.
Next, by applying (\ref{wupp3}) to $m=-1$ and $m=-2$ respectively, we deduce $(-1+2\nu)g^2(-1)=(-2+2\nu)g^2(-2)=0$. Since $-1+2\nu\neq 0$ and $-2+2\nu\neq 0$, one has $g(-1)=g(-2)=0$.  If we let $m=-1$ and $n=-k$ in (\ref{f(mn1)}) where $k\geqslant2$, it follows that
\begin{equation}\label{wupp4}
(k-1)g(-1-k)=(1-k+\nu)f(-1-k+\nu)g(-k).
\end{equation}
By letting $k=2$ in (\ref{wupp4}), we have $g(-3)=0$. Again, by letting $k=3$ in (\ref{wupp4}), one has $g(-4)=0$. In turn, we obtain by (\ref{wupp4}) that
$g(\mathbb{Z}_{<0})=0$, therefore the conclusion is proved.

Case II. $\nu \leqslant -3$. Applying the similar method to Case I, one also can obtain that $g(\mathbb{Z})=0$.

\begin{assertion}\label{pro21}
$f$, $g$ and $\tau$ must be determined by $\mathcal{NP}^{b,\nu}_1$ or $\mathcal{NP}^{b,\nu}_2$ in Table 2.
\end{assertion}

By Assertion \ref{wangy1}, $g(\mathbb{Z})=0$ if $\nu\notin \{1,2,-1,-2\}$. Since $g\neq 0$, then $\nu\in \{1,2,-1,-2\}$.

Case 1. $\nu=1$. From Assertion \ref{weib11} (i), $g(\mathbb{Z}_{\geqslant 3})=g(\mathbb{Z}_{\leqslant -3})=0$. Next, we discuss the images $f(k)$
for $k\in \{-1,-2,0,1,2\}$. Taking $m=-n=1$ and $m=-n=2$ in (\ref{f(mn1)}), respectively, one has $g(1)=g(2)=0$. If we let $n=0$ in (\ref{jiaoj}), one has
\begin{equation}\label{guos1}
mg(m)g(0)=((m+1)g(m)+(m-1)g(0))g(m+1).
\end{equation}
Taking $m=-1, -2$ and $-3$ in (\ref{guos1}), respectively, by $g(-3)=0$  we have that
$$
g(-1)g(0)=2g(0)g(0), \ 2g(-2)g(0)=3g(-1)g(0), \ g(0)g(-2)=0,
$$
which implies that $g^2(0)=0$. Thus, $g(0)=0$.  If we let $m=-2, n=1$ and $m=-2, n=0$ in (\ref{f(mn1)}), respectively, then one has that
$(f(0)+1)g(-2)=0$ and $2(f(0)+1)g(-2)=f(0)g(-2)$. This yields that $g(-2)=0$. Let $m=-1$ and $n=0$ in (\ref{f(mn1)}), then
we have $(f(0)+1)g(-1)=0$. The fact of $g\neq 0$ means that $g(-1)=-b\neq 0$ for some
$b\in \mathbb{C}$. At the same time, we must have that $f(0)+1=0$, namely, $f(0)=a=-1$. By the above part of the analysis and discussion, we see that
$$f(\mathbb{Z}_{\geqslant 0})=-1, \ \ f(\mathbb{Z}_{<0})=0,\  g(-1)=-b, \ g(\mathbb{Z}\setminus \{-1\})=0,$$
for some nonzero $b\in \mathbb{C}$. This is of the type  $\mathcal{NP}^{b,1}_1$ in Table 2.

Case 2. $\nu=2$. By Assertion \ref{weib11} (i), $g(\mathbb{Z}_{\geqslant 3})=g(\mathbb{Z}_{\leqslant -3})=0$. Thus, we only discuss the images $f(k)$
for $k\in \{-1,-2,0,1,2\}$. Taking $m=-n=1$ and $m=-n=2$ in (\ref{f(mn1)}), respectively, it follows that $g(1)=g(2)=0$.  Let $m=-1, n=-2$ and $m=-3, n=-1$ in (\ref{jiaoj}), respectively, we get
$2g(-1)g(-2)=3g(-1)g(-1)$ and $g(-1)g(-2)=0$, which yields that $g^2(-1)=0$. Hence $g(-1)=0$.  We again obtain $g(0)=0$ by taking $m=-1$ and $n=0$ in (\ref{f(mn1)}). Finally, it follows that $(f(0)+1)g(-2)=0$ by taking $m=-2$ and $n=0$ in (\ref{f(mn1)}). Since $g\neq 0$, hence $f(0)=a=-1$ and $g(-2)=-b\neq 0$ for some nonzero $b\in \mathbb{C}$. It proves that
$$f(\mathbb{Z}_{\geqslant 0})=-1, \ \ f(\mathbb{Z}_{<0})=0,\  g(-2)=-b, \ g(\mathbb{Z}\setminus \{-2\})=0,$$
which is of the type  $\mathcal{NP}^{b,2}_1$ in Table 2.

Case 3. $\nu=-1$. Similar to the discussion of Case 1, we obtain the type $\mathcal{NP}^{b,-1}_2$ in Table 2.

Case 4. $\nu=-2$. Similar to the discussion of Case 2, we obtain the type $\mathcal{NP}^{b,-2}_2$ in Table 2.
\end{proof}

\begin{proposition}\label{zaa2}
Suppose that $f$ takes the form determined by $\mathcal{P}_5$ in Table 1, i.e., $f(\mathbb{Z}_{\geqslant2})=-1, f(\mathbb{Z}_{\leqslant 1})=0$.
Then the shifting post-Lie algebra structure on the Witt algebra $(W, [,])$ satisfying (\ref{mainequ11}) is determined by $\mathcal{NP}^{b,\nu}_3$,
$\mathcal{NP}^{b,\nu}_4$ or $\mathcal{NP}^{b,\nu}_5$ in Table 2.
\end{proposition}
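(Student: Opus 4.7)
The plan is to follow closely the template used in the proof of Proposition \ref{bdd}, adapted to the new piecewise constant shape of $f$ coming from $\mathcal{P}_5$, namely $f(m)=-1$ for $m\geqslant 2$ and $f(m)=0$ for $m\leqslant 1$. The argument should proceed through three analogues of Assertions \ref{weib11}, \ref{wangy1} and \ref{pro21}: first confine the support of $g$ to a bounded window depending on $\nu$, next eliminate all but finitely many $\nu$, and finally for each remaining $\nu$ pin down the nonzero values of $g$ from the boundary equations.

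For the first step I would plug pairs $(m,n)$ lying in a region where $f$ is constant into (\ref{f(mn1)}). When $m,n\geqslant 2$ with $m+n+\nu\geqslant 2$, every occurrence of $f$ in (\ref{f(mn1)}) equals $-1$, so $f(m+n+\nu)-f(m)$ and $f(m+n+\nu)-f(n)$ both vanish while $f(m)+f(n)+1=-1$, collapsing (\ref{f(mn1)}) to $(m-n)g(m+n)=0$. Varying $m\neq n$ in this regime forces $g(k)=0$ for all sufficiently large $k$, with a threshold depending on $\nu$. A symmetric argument with $m,n\leqslant 1$ and $m+n+\nu\leqslant 1$ (where now $f(m)+f(n)+1=1$ and the other $f$ values on the right are $0$) gives $g(k)=0$ for all sufficiently negative $k$. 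Combined, $g$ is supported in a bounded window whose size grows linearly with $|\nu|$.

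For the second step I would split by the sign and magnitude of $\nu$. For $\nu\geqslant 1$ the support window is small, and by feeding specific small $(m,n)$ into (\ref{jiaoj}) and (\ref{f(mn1)}), and using the specialization (\ref{wupp3}) to relate $g(m)^2$ to $g(-\nu)$, the remaining candidate nonzero values should be eliminated because the linear coefficients $m-n\pm\nu$ are nonzero and one can divide. The conclusion will be $g\equiv 0$ for all $\nu\geqslant 1$, contradicting the hypothesis $\varrho\neq 0$; the same style of elimination applied to $\nu\leqslant -5$ should also kill $g$, so only $\nu\in\{-1,-2,-3,-4\}$ survive.

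The final step is the most delicate. For $\nu\in\{-3,-4\}$, and generically for $\nu=-2$, the residual equations should leave $g(-\nu)=-b$ as the only admissible nonzero value, giving type $\mathcal{NP}^{b,\nu}_3$. For the exceptional $\nu=-1$ the linear prefactors in (\ref{jiaoj}) and (\ref{f(mn1)}) degenerate in such a way that $g(2)$ may be nonzero with all other values zero, yielding $\mathcal{NP}^{b,-1}_4$; for $\nu=-2$ there is in addition to $\mathcal{NP}^{b,-2}_3$ a second branch where both $g(2)$ and $g(3)$ are forced to be nonzero and linked by $g(3)=2g(2)$, giving $\mathcal{NP}^{b,-2}_5$. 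The principal obstacle I expect is exactly this boundary bookkeeping around $m,n\in\{0,1,2,3\}$: $f$ changes value there, several prefactors that would permit clean division become zero precisely at the exceptional $\nu$, and one must systematically enumerate which combinations of $g$ values are compatible so as not to overlook a branch or conflate $\mathcal{NP}^{b,-2}_3$ with $\mathcal{NP}^{b,-2}_5$.
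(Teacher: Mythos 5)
Your proposal follows essentially the same route as the paper's proof, which is likewise organized into three assertions: first confining the support of $g$ by evaluating (\ref{f(mn1)}) on regions where $f$ is constant to get $(m-n)g(m+n)=0$, then ruling out $\nu\geqslant 1$ and $\nu\leqslant -5$ via (\ref{jiaoj}), (\ref{f(mn1)}) and (\ref{wupp3}), and finally treating $\nu\in\{-1,-2,-3,-4\}$ case by case. You also correctly anticipate the delicate boundary bookkeeping, including the single branch $\mathcal{NP}^{b,-1}_4$ at $\nu=-1$ and the two branches $\mathcal{NP}^{b,-2}_3$ and $\mathcal{NP}^{b,-2}_5$ (with $g(3)=2g(2)$) at $\nu=-2$, exactly as in the paper.
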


\begin{proof} The proof is divided into the following: Assertions \ref{weib1}, \ref{wangy2} and \ref{pro2}.

\begin{assertion}\label{weib1}
{\rm (i)} When $\nu>0$, we have $g(\mathbb{Z}_{\geqslant 5})=g(\mathbb{Z}_{\leqslant 1-\nu})=0$;

{\rm (ii)} When $\nu<0$, we have $g(\mathbb{Z}_{ \geqslant \max\{5, 2-\nu})=g(\mathbb{Z}_{\leqslant 1})=0$.
\end{assertion}

For any $m,n\in \mathbb{Z}$ with $\{m,n, m+n+\nu\} \subset \mathbb{Z}_{\geqslant 2}$ or
$\{m,n, m+n+\nu\} \subset \mathbb{Z}_{\leqslant 1}$, one has by (\ref{f(mn1)}) that $(m-n)g(m+n)=0$.
The assertion are easy to check.

\begin{assertion}\label{wangy2}
If $\nu \geqslant 1 $ or $\nu \leqslant -5$,  then $g(\mathbb{Z})=0$.
\end{assertion}

Case I. $\nu \geqslant 1$. By Assertion \ref{weib1} (i), $g(\mathbb{Z}_{\geqslant 5})=g(\mathbb{Z}_{\leqslant 1-\nu})=0$. Taking $n=0$ in (\ref{f(mn1)}), then
\begin{equation}\label{wupp1}
(m(f(m)+1)-(m+\nu)f(m+\nu))g(m)=(\nu-m)(f(m+\nu)-f(m))g(0)
\end{equation}
for all $m\in \mathbb{Z}$. If we let $m\in\{2,3,4\}$, then $f(m)=f(m+\nu)=-1$ and by (\ref{wupp1}), we have $(m+\nu)g(m)=0$. This implies $g(2)=g(3)=g(4)=0$.
Take $m=1$ and $n=-1-\nu$ in (\ref{jiaoj}), then by $g(-1-\nu)=0$,
\begin{equation}\label{wupp2}
(2+2\nu)g(1)g(0)=0.
\end{equation}
If we let $m=1$ in (\ref{wupp1}), it follows that $-\nu g(1)=(1-\nu) g(0)$. This, together with (\ref{wupp2}), gives $g(1)=g(0)=0$ for the case $\nu>1$.
But for the case $\nu=1$, we also have $g(1)=0$ since $-\nu g(1)=(1-\nu) g(0)$ and $g(0)=0$ since $g(\mathbb{Z}_{\leqslant 1-\nu})=0$. In order to prove the conclusion, we only need show that $g(k)=0$ for all $k\in \{2-\nu, 3-\nu, \cdots, -2,-1\}$. For such $k$, we let $k=s-\nu$, where $2\leqslant s<\nu$.
Note that $k<0$, so $f(k)=0$ and $f(k+\nu)=f(s)=-1$. Applying (\ref{wupp1}) to $m=k$, we have by $g(0)=0$ that $(k-(k+\nu))g(k)=0$. In other words, $g(k)=0$, as desired.

Case II.  $\nu \leqslant -5$. By Assertion \ref{weib1} (ii), $g(\mathbb{Z}_{\geqslant 2-\nu})=g(\mathbb{Z}_{\leqslant 1})=0$. From this, we only need to prove that $g(m)=0$ for every $m\in \{2,3,\cdots, 1-\nu\}$.
For $m\in \mathbb{Z}$ with $1<m<2-\nu$, it follows that $m+2\nu<2+\nu<0$.
Taking $m=2$ and $n=-2-\nu$ in (\ref{jiaoj}) and (\ref{f(mn1)}), respectively, we obtain that
\begin{eqnarray}
&& (4+\nu)g(2)g(-2-\nu)=0, \label{guol1}\\
&& (4+\nu)g(-\nu)=(4+2\nu)g(2)+4g(-2-\nu)\label{guol2}.
\end{eqnarray}
Note that $4+\nu\neq 0$, so (\ref{guol1}) tells us that $g(2)g(-2-\nu)=0$. This, together with (\ref{guol2}), gives that
\begin{eqnarray}
&& (4+\nu)g(2)g(-\nu)=(4+2\nu)g^2(2)\label{guol3},\\
&& (4+\nu)g(-2-\nu)g(-\nu)=4g^2(-2-\nu)\label{guol4}.
\end{eqnarray}
If we let $m=2$ and $m=-2-\nu$ in (\ref{wupp3}), respectively, we have that
\begin{eqnarray}
&& \nu g(2)g(-\nu)=(2+2\nu)g^2(2)\label{guol5},\\
&& \nu g(-2-\nu)g(-\nu)=(\nu-2)g^2(-2-\nu)\label{guol6}.
\end{eqnarray}
It follows by (\ref{guol3}) and (\ref{guol5}) that
$$
\begin{pmatrix}
4+\nu & 4+2\nu\\
\nu & 2+2\nu
\end{pmatrix}
\begin{pmatrix}
g(2)g(-\nu)\\
-g^2(2)
\end{pmatrix}=0,
$$
and by (\ref{guol4}) and (\ref{guol6}) that
$$
\begin{pmatrix}
4+\nu & 4\\
\nu &\nu-2
\end{pmatrix}
\begin{pmatrix}
g(-2-\nu)g(-\nu)\\
-g^2(-2-\nu))
\end{pmatrix}=0.
$$
Since $\nu\leqslant -5$, we deduce
$$\det \begin{pmatrix}
4+\nu & 4+2\nu\\
\nu & 2+2\nu
\end{pmatrix}=8+6\nu\neq 0,
\ \
\det \begin{pmatrix}
4+\nu & 4\\
\nu &\nu-2
\end{pmatrix}=(\nu-4)(\nu+2)\neq 0,$$
which yields that $-g^2(2)=-g^2(-2-\nu)=0$, and then $g(2)=g(-2-\nu)=0$. Thus, from (\ref{guol2}) we have $(4+\nu)g(-\nu)=(4+2\nu)g(2)+4g(-2-\nu)=0$. Note that $4+\nu\neq 0$, so $g(-\nu)=0$. This, together with (\ref{wupp3}), implies that $(m+2\nu)g^2(m)=0$. For $m\in \mathbb{Z}$ with $1<m<2-\nu$, we have $m+2\nu<2+\nu<0$. Therefore, $g^2(m)=0$ and thereby $g(m)=0$ for every $m\in \{2,3,\cdots, 1-\nu\}$.
The proof is completed.

\begin{assertion}\label{pro2}
$f$, $g$ and $\tau$ must be determined by $\mathcal{NP}^{b,\nu}_3$, $\mathcal{NP}^{b,\nu}_4$ or $\mathcal{NP}^{b,\nu}_5$ in Table 2.
\end{assertion}

By Assertion \ref{wangy2}, $g(\mathbb{Z})=0$ if $\nu\notin \{-1, -2,-3,-4\}$. Since $g\neq0$, then $\nu\in \{-1, -2,-3,-4\}$.

Case 1. $\nu=-1$. From Clam \ref{weib1} (ii), $g(\mathbb{Z}_{\geqslant 5})=g(\mathbb{Z}_{\leqslant 1})=0$. Next, we discuss the images $f(k)$
for $k\in \{2,3,4\}$. Taking $m=0, n=3$ and $m=0, n=4$ in (\ref{f(mn1)}), respectively, one has that $g(3)=g(4)=0$. In this case,  $g(2)=-b\neq 0$ for some
$b\in \mathbb{C}$. Thus, it summarizes as
$$f(\mathbb{Z}_{\geqslant 2})=-1, \ \ f(\mathbb{Z}_{\leqslant 1})=0,\  g(2)=-b, \ g(\mathbb{Z}\setminus \{2\})=0.$$
It gives the type $\mathcal{NP}_4^{b,-1}$ in Table 2.

Case 2. $\nu=-2$. By Assertion \ref{weib1} (ii) we have $g(\mathbb{Z}_{\geqslant 5})=g(\mathbb{Z}_{\leqslant 1})=0$. Thus, we only discuss the images $f(k)$
for $k\in \{2,3,4\}$. Taking $m=0$ and $n=4$ in (\ref{f(mn1)}), it follows that $g(4)=0$.  If we let $m=3$ and $\nu=-2$ in (\ref{wupp3}), we get
$g(3)g(3)=2g(2)g(3)$.  This tells us that if $g(2)=0$ then $g(3)=0$. Since $g\neq 0$, it must be $g(2)=-b\neq 0$. In this case, either $g(3)=0$ which gives
the type $\mathcal{NP}_3^{b,-2}$ in Table 2; or $g(3)=2g(2)=-2b\neq 0$
which gives the type $\mathcal{NP}_5^{b,-2}$ in Table 2.

Case 3. $\nu=-3$. By Assertion \ref{weib1} (ii), $g(\mathbb{Z}_{\geqslant 5})=g(\mathbb{Z}_{\leqslant 1})=0$. Thus, we only discuss the images $f(k)$
for $k\in \{2,3,4\}$.  Taking $m=2, n=5$ and $m=2, n=4$ in (\ref{f(mn1)}), respectively, one has that
\begin{equation}\label{xiaomin1}
g(2)g(4)=0, \ \ g(3)g(4)=5g(2)g(3).
\end{equation}
If we let $\nu=-3$ and $m=2$ and $m=4$ in (\ref{wupp3}), respectively, we get
\begin{equation}\label{xiaomin2}
3g(2)g(3)=4g(2)g(2), \ \ 3g(2)g(3)=2g(4)g(4).
\end{equation}
Combing (\ref{xiaomin1}) with (\ref{xiaomin2}), it implies that $g^2(4)=10g^2(2)$ and $g(2)g(4)=0$. This means that $g(2)=g(4)=0$.
Since $g\neq 0$, we have $g(3)=-b\neq 0$ for some $b\in \mathbb{C}\setminus \{0\}$. It is easy to check that this is just the type $\mathcal{NP}_3^{b,-3}$ in Table 2.

Case 4. $\nu=-4$. By Assertion \ref{weib1} (ii) we have $g(\mathbb{Z}_{\geqslant 6})=g(\mathbb{Z}_{\leqslant 1})=0$. Thus, we only discuss the images $f(k)$
for $k\in \{2,3,4,5\}$.
If we let $\nu=-4$ and $m=2,3,5$ in (\ref{wupp3}), respectively, we get
\begin{equation}\label{xiaomin3}
4g(2)g(4)=2g(2)g(2), \ \ 4g(3)g(4)=5g(3)g(3), \ \ 4g(4)g(5)=3g(5)g(5).
\end{equation}
Taking $m=2$ and $n=6$ in (\ref{jiaoj}), it follows that $g(2)g(4)=0$. This, together with (\ref{xiaomin3}), implies that $g^2(2)=0$. So we have $g(2)=0$.
If we let $m=3, n=6$ and $m=3, n=5$ in (\ref{jiaoj}), respectively, we obtain that $g(3)g(5)=0$ and
$g(4)g(5)=3g(3)g(4)$. This yields that $g(4)g(5)=g(3)g(4)=0$. This, together with (\ref{xiaomin3}), gives that $g^2(3)=g^2(5)=0$. Thus,
$g(3)=g(5)=0$. Since $g\neq 0$, we have $g(4)=-b\neq 0$ for some $b\in \mathbb{C}\setminus \{0\}$. It is easy to check that this is just the type $\mathcal{NP}_3^{b,-4}$ in Table 2.
\end{proof}

\begin{proposition}\label{pro3}
Suppose that $f$ takes the form determined by $\mathcal{P}_6$ in Table 1.
Then the shifting post-Lie algebra structure on the Witt algebra $(W, [,])$ satisfying (\ref{mainequ11}) is determined by $\mathcal{NP}^{b,\nu}_6$,
$\mathcal{NP}^{b,\nu}_7$ or $\mathcal{NP}^{b,\nu}_8$ in Table 2.
\end{proposition}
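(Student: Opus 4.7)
The plan is to mirror the proof of Proposition \ref{zaa2} with the roles of ``$-1$'' and ``$0$'' interchanged in the piecewise description of $f$, since $\mathcal{P}_6$ is related to $\mathcal{P}_5$ by exchanging these two values. I would organize the argument into three assertions analogous to Assertions \ref{weib1}, \ref{wangy2}, and \ref{pro2}.

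First, I would establish support bounds on $g$: for any pair $(m,n)$ with $\{m,n,m+n+\nu\} \subset \mathbb{Z}_{\geqslant 2}$ or $\{m,n,m+n+\nu\} \subset \mathbb{Z}_{\leqslant 1}$, equation (\ref{f(mn1)}) reduces to a nonzero multiple of $(m-n)g(m+n)$ on the left (since $f(m)+f(n)+1$ equals $1$ or $-1$ respectively) and zero on the right. Sweeping $m,n$ through the allowed region yields bounds of the form $g(\mathbb{Z}_{\geqslant N_1}) = g(\mathbb{Z}_{\leqslant N_2}) = 0$, with $N_1, N_2$ depending on the sign and magnitude of $\nu$ in the same way as in Assertion \ref{weib1}.

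Second, I would eliminate all $\nu$ outside $\{-1,-2,-3,-4\}$. The strategy is to specialize (\ref{f(mn1)}) at $n=0$ and combine it with the general identity (\ref{wupp3}) to obtain a $2\times 2$ linear system in the unknowns $g(m)g(-\nu)$ and $g^2(m)$ for a few small $m$. For $\nu \geqslant 1$ and $\nu \leqslant -5$, the coefficient determinants will be nonzero (verifiable by direct computation case-by-case, as in Case II of Assertion \ref{wangy2}), forcing $g \equiv 0$ and contradicting $g \neq 0$. This leaves only $\nu \in \{-1,-2,-3,-4\}$.

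Third, for each remaining $\nu$, I would pin down $g$ explicitly by substituting small integers into (\ref{jiaoj}), (\ref{f(mn1)}) and (\ref{wupp3}); in most cases all but one value of $g(k)$ vanish, and the surviving value matches one of $\mathcal{NP}_6^{b,\nu}$, $\mathcal{NP}_7^{b,\nu}$, $\mathcal{NP}_8^{b,\nu}$. The main obstacle I anticipate is the case $\nu=-2$: there the linear system relating $g(2)$ and $g(3)$ is rank-deficient and admits a one-parameter family of solutions beyond the single-point support. This is precisely what produces two distinct structures ($\mathcal{NP}_6^{b,-2}$ with only $g(-\nu)=-b$, and $\mathcal{NP}_8^{b,-2}$ with $g(3)=2g(2)=-2b$), mirroring the appearance of $\mathcal{NP}_5^{b,-2}$ in Proposition \ref{zaa2}. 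Care will be needed to verify that these, and no others, exhaust the admissible solutions and that the resulting data genuinely satisfy all of (\ref{11223})--(\ref{f(mn1)}).
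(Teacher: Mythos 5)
Your proposal is correct and is exactly what the paper intends: its proof of Proposition \ref{pro3} consists of the single line ``The proof is similar to Proposition \ref{zaa2},'' and your three-assertion outline is a faithful execution of that analogy, including the correct identification of the rank-deficient $\nu=-2$ case that produces the two-point-support solution $\mathcal{NP}^{b,-2}_8$. One remark that makes the transfer immediate and spares you the case-by-case determinant checks: equation (\ref{jiaoj}) does not involve $f$ at all, and equation (\ref{f(mn1)}) merely changes overall sign under the substitution $f\mapsto -1-f$ (which carries $\mathcal{P}_5$ to $\mathcal{P}_6$), so the constraints on $(g,\nu)$ are literally identical to those solved in Proposition \ref{zaa2} and the admissible $(g,\nu)$ are the same, as Table 2 confirms.
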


\begin{proof}
The proof is similar to Proposition \ref{zaa2}.
\end{proof}

Next, by using Proposition \ref{iso2}, similar to Propositions \ref{bdd}, \ref{zaa2} and \ref{pro3}, one has the following three propositions.

\begin{proposition}\label{bdd1}
Suppose that $f$ takes the form determined by $\mathcal{P}_4^a$ in Table 1. Then the shifting post-Lie algebra structure on the Witt algebra $(W, [,])$ satisfying (\ref{mainequ11}) is determined by $\mathcal{MP}^{b,\nu}_1$ or
$\mathcal{MP}^{b,\nu}_2$ in Table 2.
\end{proposition}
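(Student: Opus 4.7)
The plan is to reduce this to Proposition~\ref{bdd} by means of the isomorphism $\tau(L_m)=-L_{-m}$ from Proposition~\ref{iso2}, exploiting the fact that $\mathcal{P}_4^a$ is obtained from $\mathcal{P}_3^a$ by the substitution $m\mapsto -m$.

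First I would observe that if $f$ takes the form determined by $\mathcal{P}_4^a$, then the function $\tilde{f}(m):=f(-m)$ takes the form determined by $\mathcal{P}_3^a$, since $f(\mathbb{Z}_{>0})=0$, $f(\mathbb{Z}_{<0})=-1$, $f(0)=a$ translates to $\tilde{f}(\mathbb{Z}_{>0})=-1$, $\tilde{f}(\mathbb{Z}_{<0})=0$, $\tilde{f}(0)=a$. Next, given a shifting post-Lie algebra $\mathcal{P}(\phi_2,\varrho_2,\nu_2)$ of the form in (\ref{mainequ11}) with $f$ of type $\mathcal{P}_4^a$, I would define $\phi_1(m,n):=-\phi_2(-m,-n)$, $\varrho_1(m,n):=-\varrho_2(-m,-n)$, and $\nu_1:=-\nu_2$. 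By Proposition~\ref{iso2}, $\mathcal{P}(\phi_1,\varrho_1,\nu_1)$ is a shifting post-Lie algebra isomorphic to $\mathcal{P}(\phi_2,\varrho_2,\nu_2)$, and its associated function $f_1$ satisfies $f_1(m)=f(-m)=\tilde{f}(m)$, hence is of type $\mathcal{P}_3^a$.

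Second, I would apply Proposition~\ref{bdd} to $\mathcal{P}(\phi_1,\varrho_1,\nu_1)$. This forces it to be of type $\mathcal{NP}^{b,\nu_1}_1$ (with $\nu_1\in\{1,2\}$) or $\mathcal{NP}^{b,\nu_1}_2$ (with $\nu_1\in\{-1,-2\}$). In particular, in both cases $a=-1$, the function $g_1$ is supported at the single point $-\nu_1$ with $g_1(-\nu_1)=-b$ for some nonzero $b\in\mathbb{C}$.

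Third, I would pull back along $\tau$. Using $g_2(m)=g_1(-m)$ and $\nu_2=-\nu_1$ (which is how $\varrho$ and $\nu$ transform under the conditions of Proposition~\ref{iso2}), the support of $g_1$ at $-\nu_1$ becomes the support of $g_2$ at $\nu_1=-\nu_2$. So $\mathcal{NP}^{b,\nu_1}_1$ with $\nu_1\in\{1,2\}$ corresponds to the data $f_2(\mathbb{Z}_{\leqslant 0})=-1$, $f_2(\mathbb{Z}_{>0})=0$, $g_2(-\nu_2)=-b$, $g_2(\mathbb{Z}\setminus\{-\nu_2\})=0$ with $\nu_2\in\{-1,-2\}$, which is precisely $\mathcal{MP}^{b,\nu}_1$; similarly $\mathcal{NP}^{b,\nu_1}_2$ corresponds to $\mathcal{MP}^{b,\nu}_2$.

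There is essentially no obstacle here beyond bookkeeping: the content of the proposition has already been done in Proposition~\ref{bdd}, and the only thing to verify is that the symmetries of Table~2 line up correctly under the involution on data induced by $\tau$, which they do by direct inspection of the two blocks of rows in Table~2.
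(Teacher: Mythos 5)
Your proposal is correct and is essentially the paper's own argument: the paper gives no separate proof of this proposition, stating only that it follows ``by using Proposition~\ref{iso2}, similar to Proposition~\ref{bdd}'', which is precisely the reduction via $\tau(L_m)=-L_{-m}$ that you spell out. (One harmless slip: in the $\mathcal{NP}^{b,\nu}_2$ row of Table~2 one has $f(\mathbb{Z}_{\leqslant 0})=0$, so $a=0$ there rather than $-1$; this does not affect the bookkeeping.)
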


\begin{proposition}\label{zaa22}
Suppose that $f$ takes the form determined by $\mathcal{P}_7$ in Table 1.
Then the shifting post-Lie algebra structure on the Witt algebra $(W, [,])$ satisfying (\ref{mainequ11}) is determined by $\mathcal{MP}^{b,\nu}_3$,
$\mathcal{MP}^{b,\nu}_4$ or $\mathcal{MP}^{b,\nu}_5$ in Table 2.
\end{proposition}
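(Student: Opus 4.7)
The plan is to deduce this proposition directly from Proposition \ref{zaa2} by means of the Lie automorphism $\tau\colon L_m\mapsto -L_{-m}$ and the transfer principle of Proposition \ref{iso2}, rather than redoing the lengthy case analysis from scratch. The observation that makes this work is that $\mathcal{P}_7$ is the ``mirror'' of $\mathcal{P}_5$ under $m\mapsto -m$, and Proposition \ref{iso2} tells us that any shifting post-Lie structure can be transported through $\tau$ with the simultaneous sign flip $\nu\leftrightarrow -\nu$.

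First, let $\mathcal{P}(\phi,\varrho,\nu)$ be a shifting post-Lie structure on $W$ whose function $f$ is of the form $\mathcal{P}_7$, i.e.\ $f(\mathbb{Z}_{\leqslant -2})=-1$, $f(\mathbb{Z}_{\geqslant -1})=0$. Set
\[
\widetilde{\phi}(m,n)=-\phi(-m,-n),\qquad \widetilde{\varrho}(m,n)=-\varrho(-m,-n),\qquad \widetilde{\nu}=-\nu.
\]
By Proposition \ref{iso2}, $\mathcal{P}(\widetilde{\phi},\widetilde{\varrho},\widetilde{\nu})$ is again a shifting post-Lie algebra on $W$ and $\tau$ is an isomorphism onto it. A one-line substitution in (\ref{11211}) and (\ref{11222}) shows that the new data are encoded by $\widetilde{f}(m)=f(-m)$ and $\widetilde{g}(m)=g(-m)$. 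In particular $\widetilde{f}(\mathbb{Z}_{\geqslant 2})=-1$ and $\widetilde{f}(\mathbb{Z}_{\leqslant 1})=0$, so $\widetilde{f}$ is exactly of the form $\mathcal{P}_5$.

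Now Proposition \ref{zaa2} applies to $\mathcal{P}(\widetilde{\phi},\widetilde{\varrho},\widetilde{\nu})$: the triple $(\widetilde{f},\widetilde{g},\widetilde{\nu})$ must be one of $\mathcal{NP}^{b,\widetilde{\nu}}_3$, $\mathcal{NP}^{b,\widetilde{\nu}}_4$, $\mathcal{NP}^{b,\widetilde{\nu}}_5$. Translating back via $g(m)=\widetilde{g}(-m)$ and $\nu=-\widetilde{\nu}$ gives, respectively, $g(-\nu)=-b$ with $\nu\in\{2,3,4\}$; $g(-2)=-b$ with $\nu=1$; and $g(-3)=2g(-2)=-2b$ with $\nu=2$. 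Read against Table 2, these are precisely the types $\mathcal{MP}^{b,\nu}_3$, $\mathcal{MP}^{b,\nu}_4$, $\mathcal{MP}^{b,\nu}_5$. Conversely, each of those three types is indeed a shifting post-Lie structure on $W$, since it is the $\tau$-image of the corresponding (already verified) case appearing in Proposition \ref{zaa2}.

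The only real bookkeeping hurdle is making sure the sign flip $\widetilde{\nu}=-\nu$ is applied consistently throughout, especially in the support conditions on $g$: a careless substitution would swap $g(-\nu)$ and $g(\nu)$, or turn $g(-3)=2g(-2)$ into $g(3)=2g(2)$. Once one verifies term-by-term that each of (\ref{11211})--(\ref{f(mn1)}) is invariant under $(m,n,\nu,f,g)\mapsto(-m,-n,-\nu,\widetilde{f},\widetilde{g})$ (after multiplying through by $-1$), no post-Lie axiom need be re-checked directly, and the entire classification is inherited from Proposition \ref{zaa2}.
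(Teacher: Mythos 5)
Your proposal is correct and matches the paper's own route: the paper explicitly derives Propositions \ref{bdd1}, \ref{zaa22} and \ref{pro32} "by using Proposition \ref{iso2}" from Propositions \ref{bdd}, \ref{zaa2} and \ref{pro3}, which is exactly the $\tau$-transfer with $\widetilde{f}(m)=f(-m)$, $\widetilde{g}(m)=g(-m)$, $\widetilde{\nu}=-\nu$ that you carry out. Your version just spells out the bookkeeping (correctly) that the paper leaves implicit.
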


\begin{proposition}\label{pro32}
Suppose that $f$ takes the form determined by $\mathcal{P}_8$ in Table 1.
Then the shifting post-Lie algebra structure on the Witt algebra $(W, [,])$ satisfying (\ref{mainequ11}) is determined by $\mathcal{MP}^{b,\nu}_6$,
$\mathcal{MP}^{b,\nu}_7$ or $\mathcal{MP}^{b,\nu}_8$ in Table 2.
\end{proposition}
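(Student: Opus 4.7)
The plan is to deduce Proposition \ref{pro32} from Proposition \ref{pro3} via the automorphism machinery supplied by Proposition \ref{iso2}, exactly as Propositions \ref{bdd1} and \ref{zaa22} are obtained from Propositions \ref{bdd} and \ref{zaa2}. The first observation is that $\mathcal{P}_8$ is the image of $\mathcal{P}_6$ under the substitution $m\mapsto -m$: the conditions $f(\mathbb{Z}_{\geqslant -1})=-1$ and $f(\mathbb{Z}_{\leqslant -2})=0$ defining $\mathcal{P}_8$ translate, under $m\mapsto -m$, into $f(\mathbb{Z}_{\leqslant 1})=-1$ and $f(\mathbb{Z}_{\geqslant 2})=0$, which is exactly $\mathcal{P}_6$.

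Now suppose that $(\mathcal{P}(\phi_1,\varrho_1,\nu_1),\circ_1)$ is a shifting post-Lie algebra satisfying (\ref{mainequ11}) with $f$ of $\mathcal{P}_8$ form, auxiliary function $g$, and nonzero integer $\nu_1$. Following (\ref{weiwei1}), define $\phi_2(m,n)=-\phi_1(-m,-n)$, $\varrho_2(m,n)=-\varrho_1(-m,-n)$, and $\nu_2=-\nu_1$. A short direct computation using (\ref{11211}) and (\ref{11222}) yields $\phi_2(m,n)=(m-n)f_2(m)$ with $f_2(m)=f(-m)$, together with $\varrho_2(m,n)=(m-n+\nu_2)g_2(m)$, where $g_2(m)=g(-m)$. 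By the observation of the previous paragraph, $f_2$ is of $\mathcal{P}_6$ form, and Proposition \ref{iso2} then guarantees that $(\mathcal{P}(\phi_2,\varrho_2,\nu_2),\circ_2)$ is a shifting post-Lie algebra isomorphic to the original one via $\tau(L_m)=-L_{-m}$.

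Applying Proposition \ref{pro3} to the transformed algebra forces the triple $(f_2,g_2,\nu_2)$ to be of type $\mathcal{NP}^{b,\nu_2}_6$, $\mathcal{NP}^{b,\nu_2}_7$, or $\mathcal{NP}^{b,\nu_2}_8$. Unwinding the substitution $\nu_1=-\nu_2$ and $g(m)=g_2(-m)$ then translates these three alternatives one-to-one into the rows $\mathcal{MP}^{b,\nu_1}_6$, $\mathcal{MP}^{b,\nu_1}_7$, $\mathcal{MP}^{b,\nu_1}_8$ of Table 2, completing the proof.

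The main obstacle, such as it is, is purely bookkeeping: one has to confirm that the admissible shift values $\nu_2\in\{-2,-3,-4\}$, $\{-1\}$, $\{-2\}$ for the three $\mathcal{NP}$-types map, under $\nu_1=-\nu_2$, to the values $\{2,3,4\}$, $\{1\}$, $\{2\}$ listed for the corresponding $\mathcal{MP}$-types in Table 2, and that the support and normalization of $g$ are correctly transported by $g(m)=g_2(-m)$. No new analytic argument beyond combining Propositions \ref{iso2} and \ref{pro3} is required.
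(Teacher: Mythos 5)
Your proposal is correct and is essentially the paper's own argument: the paper derives Proposition \ref{pro32} from Proposition \ref{pro3} precisely via the involution $\tau(L_m)=-L_{-m}$ of Proposition \ref{iso2}, and you have simply written out the bookkeeping ($f_2(m)=f(-m)$, $g_2(m)=g(-m)$, $\nu_2=-\nu_1$) that the paper leaves implicit. The only cosmetic mismatch is that Table 2 records $g(-\nu)=b$ for $\mathcal{MP}^{b,\nu}_6$ while your transport gives $g(-\nu)=-b$, but since $b$ ranges over all nonzero complex numbers this describes the same family.
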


Our main result in this section is the following.

\begin{theorem}\label{postnon1}
A shifting post-Lie algebra structure satisfying (\ref{mainequ11}) on the Witt algebra $W$  must be one of the following types.

 {\rm ($\mathcal{NP}^{b,\nu}_1$):}  $\nu=1$ or $2$,

 \ \ \ \ \ \ $
L_m \circ_1 L_n=
\begin{cases}
(n-m)L_{m+n}, \ \ & m\geqslant 0,\\
nbL_{n}, & m=-\nu,
\\
0, & m<0,m\neq -\nu;
\end{cases}
$

{\rm ($\mathcal{NP}^{b,\nu}_2$):}  $\nu=-1$ or $-2$,

\ \ \ \ \ \  $
L_m \circ_2 L_n=
\begin{cases}
(n-m)L_{m+n}, \ \ & m> 0, m\neq -\nu,\\
(n+\nu)L_{n-\nu}+nbL_{n}, \ \ & m=-\nu,\\
0, & m\leqslant 0;
\end{cases}
$

{\rm ($\mathcal{NP}^{b,\nu}_3$):}  $\nu=-2, -3$ or $-4$,

\ \ \ \ \ \ $
L_m \circ_3 L_n=
\begin{cases}
(n-m)L_{m+n}, \ \ & m\geqslant 2,  \ m\neq -\nu\\
(n+\nu)L_{n-\nu}+nbL_{n}, \ \ & m=-\nu,\\
0, & m\leqslant 1;
\end{cases}
$

{\rm ($\mathcal{NP}^{b,\nu}_4$):}
$
L_m \circ_4 L_n=
\begin{cases}
(n-m)L_{m+n}, \ \ & m\geqslant 3,\\
(n-2)L_{n+2}+b(n-1)L_{n+1}, \ \ & m=2,\\
0, & m\leqslant 1;
\end{cases}
$

{\rm ($\mathcal{NP}^{b,\nu}_5$):}
$L_m \circ_5 L_n=
\begin{cases}
(n-m)L_{m+n}, \ \ & m\geqslant 4,\\
(n-2)L_{n+2}+nbL_{n}, \ \ & m= 2,\\
(n-3)L_{n+3}+2(n-1)bL_{n+1}, \ \ & m=3,\\
0, & m\leqslant 1;
\end{cases}
$

{\rm ($\mathcal{NP}^{b,\nu}_6$):}  $\nu=-2, -3$ or $-4$,

\ \ \ \ \ \ $
L_m \circ_6 L_n=
\begin{cases}
(n-m)L_{m+n}, & m\leqslant 1,\\
nbL_n, \ \ & m=-\nu,\\
0, \ \ & m\geqslant 2, m\neq -\nu;
\end{cases}
$

{\rm ( $\mathcal{NP}^{b,\nu}_7$):}
$
L_m \circ_7 L_n=
\begin{cases}
(n-m)L_{m+n}, & m\leqslant 1,\\
(n-1)bL_{n+1}, \ \ & m=2,\\
0, \ \ & m\geqslant 3;
\end{cases}
$

{\rm ($\mathcal{NP}^{b,\nu}_8$):}
 $L_m \circ_8 L_n=
\begin{cases}
(n-m)L_{m+n}, & m\leqslant 1,\\
nbL_{n}, \ \ & m=2,\\
2(n-1)bL_{n+1}, \ \ & m=3,\\
0, \ \ & m\geqslant 4;
\end{cases}
$

{\rm ($\mathcal{MP}^{b,\nu}_1$):}  $\nu=-1$ or $-2$,

 \ \ \ \ \ \ $
L_m \circ_9 L_n=
\begin{cases}
(n-m)L_{m+n}, \ \ & m\leqslant 0,\\
nbL_{n}, & m=-\nu,\\
0, & m>0,m\neq -\nu;
\end{cases}
$

{\rm ($\mathcal{MP}^{b,\nu}_2$):}  $\nu=1$ or $2$,

\ \ \ \ \ \  $
L_m \circ_{10} L_n=
\begin{cases}
(n-m)L_{m+n}, \ \ & m< 0, m\neq -\nu,\\
(n+\nu)L_{n-\nu}+nbL_{n}, \ \ & m=-\nu,\\
0, & m\geqslant 0;
\end{cases}
$

{\rm ($\mathcal{MP}^{b,\nu}_3$):}  $\nu=2, 3$ or $4$,

\ \ \ \ \ \ $
L_m \circ_{11} L_n=
\begin{cases}
(n-m)L_{m+n}, \ \ & m\leqslant -2,  \ m\neq -\nu\\
(n+\nu)L_{n-\nu}+nbL_{n}, \ \ & m=-\nu,\\
0, & m\geqslant -1;
\end{cases}
$

{\rm ($\mathcal{MP}^{b,\nu}_4$):}
$
L_m \circ_{12} L_n=
\begin{cases}
(n-m)L_{m+n}, \ \ & m\leqslant -3,\\
(n+2)L_{n-2}+b(n+1)L_{n-1}, \ \ & m=-2,\\
0, & m\geqslant -1;
\end{cases}
$

{\rm ($\mathcal{MP}^{b,\nu}_5$):}
$L_m \circ_{13} L_n=
\begin{cases}
(n-m)L_{m+n}, \ \ & m\leqslant -4,\\
(n+2)L_{n-2}+nbL_{n}, \ \ & m= -2,\\
(n+3)L_{n-3}+2(n+1)bL_{n-1}, \ \ & m=-3,\\
0, & m\geqslant -1;
\end{cases}
$

{\rm ($\mathcal{MP}^{b,\nu}_6$):}  $\nu=2, 3$ or $4$,

\ \ \ \ \ \ $
L_m \circ_{14} L_n=
\begin{cases}
(n-m)L_{m+n}, & m\geqslant -1,\\
nbL_n, \ \ & m=-\nu,\\
0, \ \ & m\leqslant -2, m\neq -\nu;
\end{cases}
$

{\rm ($\mathcal{MP}^{b,\nu}_7$):}
$
L_m \circ_{15} L_n=
\begin{cases}
(n-m)L_{m+n}, & m\geqslant -1,\\
(n+1)bL_{n-1}, \ \ & m=-2,\\
0, \ \ & m\leqslant -3;
\end{cases}
$

{\rm ($\mathcal{MP}^{b,\nu}_8$):} $
L_m \circ_{16} L_n=
\begin{cases}
(n-m)L_{m+n}, & m\geqslant -1,\\
nbL_{n}, \ \ & m=-2,\\
2(n+1)bL_{n-1}, \ \ & m=-3,\\
0, \ \ & m\leqslant -4
\end{cases}
$ \\
where $b$ is a non-zero number. Conversely, the above types are all shifting post-Lie algebra structures satisfying (\ref{mainequ11}) on the Witt algebra $W$. Furthermore, the post-Lie algebras $\mathcal{NP}^{b,\nu}_i$ are isomorphic to the post-Lie algebras
$\mathcal{MP}^{b,-\nu}_i$, $i=1, 2, \cdots, 8$  respectively, and other post-Lie algebras are not mutually isomorphic.
\end{theorem}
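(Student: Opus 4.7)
The plan is to assemble the classification from the already-established propositions and then translate the data $(f,g,\nu)$ into the explicit multiplication tables. By Lemma \ref{ffmm11}, any shifting post-Lie structure satisfying (\ref{mainequ11}) is encoded by the nonzero integer $\nu$ and two complex-valued functions $f,g$ on $\mathbb{Z}$ via (\ref{11211})--(\ref{f(mn1)}). Equation (\ref{11223}) for $f$ is exactly (\ref{f(mn)}), so Lemma \ref{IJ} together with Theorem \ref{mmgg} force $f$ to be one of the eight shapes $\mathcal{P}_1, \mathcal{P}_2, \mathcal{P}_3^a, \mathcal{P}_4^a, \mathcal{P}_5, \mathcal{P}_6, \mathcal{P}_7, \mathcal{P}_8$ of Table 1. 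Proposition \ref{jin} eliminates the two ``constant'' shapes $\mathcal{P}_1$ and $\mathcal{P}_2$ (they force $g \equiv 0$, contradicting $\varrho \neq 0$).

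Next I would invoke the case-by-case results for the remaining six shapes of $f$. Propositions \ref{bdd}, \ref{zaa2}, \ref{pro3} process $\mathcal{P}_3^a$, $\mathcal{P}_5$, $\mathcal{P}_6$ and show that $(g,\nu)$ is forced into one of the rows $\mathcal{NP}^{b,\nu}_1,\ldots,\mathcal{NP}^{b,\nu}_8$ of Table 2; Propositions \ref{bdd1}, \ref{zaa22}, \ref{pro32} process $\mathcal{P}_4^a$, $\mathcal{P}_7$, $\mathcal{P}_8$ and produce the rows $\mathcal{MP}^{b,\nu}_1,\ldots,\mathcal{MP}^{b,\nu}_8$. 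For each row I would then simply plug the data into (\ref{11211})--(\ref{11222}) and rewrite $L_m\circ L_n=(m-n)f(m)L_{m+n}+(m-n+\nu)g(m)L_{m+n+\nu}$; the case distinctions on $m$ appearing in the theorem come directly from the support of $f$ (giving the $(n-m)L_{m+n}$ piece whenever $f(m)=-1$) and the support of $g$ (giving the extra ``shifted'' piece whenever $g(m)\neq 0$).

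For the converse, I would verify that each of the sixteen listed rules really is a post-Lie structure by checking the five conditions of Lemma \ref{ffmm11}; this is a routine substitution, since (\ref{11223}) is already guaranteed by Theorem \ref{mmgg}, and (\ref{jiaoj}) and (\ref{f(mn1)}) only need to be tested on the finitely many triples $(m,n)$ where both $f$ or $g$ contribute nonzero terms (everything else is automatically $0=0$). For the isomorphism claim $\mathcal{NP}^{b,\nu}_i\cong \mathcal{MP}^{b,-\nu}_i$, I would apply Proposition \ref{iso2} with the map $\tau(L_m)=-L_{-m}$ and verify (\ref{weiwei1}): the conditions $\phi_2(m,n)=-\phi_1(-m,-n)$ and $\varrho_2(m,n)=-\varrho_1(-m,-n)$ translate into $f_2(m)=f_1(-m)$ and $g_2(m)=g_1(-m)$, and reading along each row of Table 2 confirms this pairing.

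The last ingredient is showing that no two distinct rows are isomorphic. As in the proof of Theorem \ref{mmgg}, any isomorphism must restrict to a Witt automorphism $\tau(L_m)=\epsilon c^m L_{\epsilon m}$ (\cite{Dok}) with $\epsilon\in\{\pm 1\}$ and $c\in\mathbb{C}^\times$. Applied to $L_m\circ_1 L_n$, this forces the support of $f$ and the support of $g$ to be preserved up to the involution $m\mapsto -m$; matching these supports distinguishes every pair of distinct rows in Table 2, and the scalar $b$ is rescaled but remains nonzero, so the families are genuinely parametrized by $b\in\mathbb{C}\setminus\{0\}$ (without further identification within each family, apart from a harmless rescaling by $c$). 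The main obstacle I anticipate is the bookkeeping in the case analysis of Propositions \ref{zaa2} and its analogues: the interplay between $f$ supported on a half-line and $g$ supported near $-\nu$ forces several subcases ($\nu=-2,-3,-4$ and the exceptional small $|\nu|$ values producing the multi-point support rows $\mathcal{NP}^{b,\nu}_4,\mathcal{NP}^{b,\nu}_5$), and each subcase requires carefully chosen substitutions in (\ref{jiaoj}) and (\ref{f(mn1)}) plus the auxiliary identity (\ref{wupp3}) to force the correct values of $g$.
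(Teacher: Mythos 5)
Your proposal follows essentially the same route as the paper: it assembles the classification from Lemma \ref{ffmm11} together with Propositions \ref{jin}--\ref{pro32}, translates each row of Table 2 into the explicit products via (\ref{11211})--(\ref{11222}), checks the converse by direct substitution, and handles the isomorphism claims via Proposition \ref{iso2} and the Dokovic--Zhao description of Witt automorphisms, exactly as in the paper. Your parenthetical observation that the diagonal automorphisms $L_m\mapsto c^mL_m$ rescale $b$ is in fact a sharper reading of the non-isomorphism claim than the paper itself offers, but it does not change the substance of the argument.
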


\begin{proof}
Suppose that $(W, [,], \circ)$ is a class of shifting post-Lie algebra structures satisfying (\ref{mainequ11}) on the Witt algebra $W$. By Propositions
\ref{jin}-\ref{pro32}, there are complex-valued function $f$ and $g$ on $\mathbb{Z}$ such that one of $16$ cases in Table 2 holds. Thus, by Lemma \ref{ffmm11} we know that the shifting post-Lie algebra structure must be one of the above $16$ types.  Conversely, every type of the $16$ cases means that there are complex-valued function $f$ and $g$ on $\mathbb{Z}$ such that (\ref{11211}) and (\ref{11222}) hold and, the Equations (\ref{11223})-(\ref{f(mn1)}) are easily verified. Thus, by Lemma \ref{ffmm11} we see that all they are the shifting post-Lie algebra structure satisfying (\ref{mainequ11}) on the Witt algebra $W$.   Finally, by Proposition \ref{iso2} we know that the post-Lie algebras $\mathcal{NP}^{b,\nu}_i$ are isomorphic to the post-Lie algebras
$\mathcal{MP}^{b,-\nu}_i$, $i=1, 2, \cdots, 8$  respectively. Next, in a similar way to the proof of Theorem \ref{mmgg}, one can see the other post-Lie algebras are not mutually isomorphic.
\end{proof}

\begin{proposition}\label{lie2}
Up to isomorphism, the post-Lie algebras in Theorem \ref{postnon1} give rise to the following Lie
algebras under the bracket \{, \} defined in (\ref{laoz}) of Proposition \ref{equ1}:

{\rm ($\mathcal{LNP}^{b,\nu}_1$):} $\nu=1$ or $\nu=2$,

\ \ \ \  $
\{L_m , L_n\}_1=
\begin{cases}
(n-m)L_{m+n}, \ \ & m,n\geqslant 0, \\
(m-n)L_{m+n}, \ \ & m,n<0, m,n\neq -\nu \\
nbL_n, & m=-\nu, n\geqslant 0,\\
nbL_n-(n+\nu)L_{n-\nu} &  m=-\nu, n<0, n\neq -\nu,\\
0,& \text{otherwise\ (unless some cases for}\ n=-\nu );
\end{cases}
$

{\rm ($\mathcal{LNP}^{b,\nu}_3$):} $\nu=-2,-3$ or $-4$,

\ \ \ \  $
\{L_m , L_n\}_3=
\begin{cases}
(n-m)L_{m+n}
\ \ & m,n \geqslant 2, m,n\neq -\nu, \\
(m-n)L_{m+n}, & m,n\leqslant 1, \\
nbL_{n}, & m=-\nu, n\leqslant 1,\\
nbL_n+(n+\nu)L_{n-\nu} &  m=-\nu, n\geqslant 2, n\neq -\nu,\\
0,& \text{otherwise\ (unless some cases for}\ n=-\nu );
\end{cases}
$

{\rm ($\mathcal{LNP}^{b,\nu}_4$):}
 $
\{L_m , L_n\}_4=
\begin{cases}
(n-m)L_{m+n}
\ \ & m,n \geqslant 3, \\
(m-n)L_{m+n}, & m,n\leqslant 1, \\
(n-1)bL_{n+1}, & m=2, n\leqslant 1,\\
(n-1)bL_{n+1}+(n-2)L_{n+2} &  m=2, n\geqslant 3,\\
0,& \text{otherwise\ (unless some cases for}\ n=2 );
\end{cases}
$

{\rm ($\mathcal{LNP}^{b,\nu}_5$):}
 $
\{L_m , L_n\}_5=
\begin{cases}
(n-m)L_{m+n}
\ \ & m,n \geqslant 4, \\
(m-n)L_{m+n}, & m,n\leqslant 1, \\
L_5+bL_3, & m=2, n=3,\\
nbL_n, & m=2, n\leqslant 1,\\
2(n-1)bL_{n+1}, & m=3, n\leqslant 1,\\
nbL_n+(n-2)L_{n+2} &  m=2, n\geqslant 4,\\
2(n-1)bL_{n+1}+(n-3)L_{n+3} &  m=3, n\geqslant 4,\\
0,& \text{otherwise\ (unless some cases for}\ n=2,3);
\end{cases}
$

where $b$ is a nonzero number.
\end{proposition}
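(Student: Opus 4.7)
The plan is to apply Proposition \ref{equ1} and compute, for each shifting post-Lie algebra structure classified in Theorem \ref{postnon1}, the induced bracket $\{L_m,L_n\}=L_m\circ L_n-L_n\circ L_m+[L_m,L_n]$ directly on the basis. By the second statement of Proposition \ref{equ1}, isomorphic post-Lie algebras induce isomorphic Lie algebras, so the post-Lie isomorphisms $\mathcal{NP}^{b,\nu}_i\cong\mathcal{MP}^{b,-\nu}_i$ already recorded in Theorem \ref{postnon1} (coming from Proposition \ref{iso2}) dispose of the eight $\mathcal{MP}$ families at once, and it suffices to analyse the eight $\mathcal{NP}^{b,\nu}_i$, $i=1,\dots,8$.

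I would first carry out the piecewise case split for $\mathcal{NP}^{b,\nu}_1,\mathcal{NP}^{b,\nu}_3,\mathcal{NP}^{b,\nu}_4,\mathcal{NP}^{b,\nu}_5$ and verify the corresponding formulas $\mathcal{LNP}^{b,\nu}_1,\mathcal{LNP}^{b,\nu}_3,\mathcal{LNP}^{b,\nu}_4,\mathcal{LNP}^{b,\nu}_5$. The computations are routine substitutions: whenever both $m,n$ lie in the regime where $L_m\circ L_n=(n-m)L_{m+n}$, a trivial cancellation gives $\{L_m,L_n\}=2(n-m)L_{m+n}+(m-n)L_{m+n}=(n-m)L_{m+n}$; whenever both $\circ$-products vanish, the bracket collapses to the Witt bracket $(m-n)L_{m+n}$. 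The only nontrivial entries are the ``mixed'' pairs in which exactly one of $m,n$ equals the distinguished index ($-\nu$ for families $1,3$, or $2,3$ for families $4,5$); here the shifted term $\varrho(m,n)L_{m+n+\nu}$ survives and yields the two-term formulas involving $b$ written in the statement.

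The remaining step is to prove that the induced Lie algebras of $\mathcal{NP}^{b,\nu}_2,\mathcal{NP}^{b,\nu}_6,\mathcal{NP}^{b,\nu}_7,\mathcal{NP}^{b,\nu}_8$ are isomorphic to members of the four families already listed, mirroring the proof of Proposition \ref{lie1} where $\mathcal{LP}_2\cong\mathcal{LP}_1$ and $\mathcal{LP}_6\cong\mathcal{LP}_5$ via $L_m\mapsto -L_m$. Observe that in each candidate pair the ``positive'' regime of one algebra sits where the ``negative'' regime of the other does (the sign of the leading factor $n-m$ versus $m-n$ is flipped between the two). The natural candidate is therefore the linear reflection $L_m\mapsto L_{-m}$, possibly composed with the rescaling $b\mapsto -b$. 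This map need not be a Witt automorphism; it only has to intertwine the two induced brackets, and a direct check shows that it does.

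The main obstacle will be verifying these last isomorphisms: each pair demands matching three piecewise regimes (the two ``halves'' of $\mathbb{Z}$ and the distinguished index or pair of indices), and the parameter $b$ may only match after the substitution $b\mapsto -b$, which is harmless since $b$ ranges over $\mathbb{C}\setminus\{0\}$. Once one pairing is verified, the other three follow by parallel computation, completing the reduction to exactly the four families $\mathcal{LNP}^{b,\nu}_1,\mathcal{LNP}^{b,\nu}_3,\mathcal{LNP}^{b,\nu}_4,\mathcal{LNP}^{b,\nu}_5$.
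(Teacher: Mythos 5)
Your overall strategy is exactly the one the paper intends (its proof is literally ``similar to Proposition \ref{lie1}''): use the post-Lie isomorphisms $\mathcal{NP}^{b,\nu}_i\cong\mathcal{MP}^{b,-\nu}_i$ together with the second half of Proposition \ref{equ1} to discard the $\mathcal{MP}$ families, compute $\{L_m,L_n\}=L_m\circ L_n-L_n\circ L_m+[L_m,L_n]$ case by case for $\mathcal{NP}^{b,\nu}_1,\mathcal{NP}^{b,\nu}_3,\mathcal{NP}^{b,\nu}_4,\mathcal{NP}^{b,\nu}_5$, and then exhibit extra Lie-algebra (not post-Lie) isomorphisms absorbing $\mathcal{NP}^{b,\nu}_2,\mathcal{NP}^{b,\nu}_6,\mathcal{NP}^{b,\nu}_7,\mathcal{NP}^{b,\nu}_8$. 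The bracket computations you describe are right, including the observation that mixed pairs outside the distinguished indices cancel to $0$.

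The one concrete misstep is your choice of the reflection $L_m\mapsto L_{-m}$ as the uniform candidate for the last four identifications. It does work for the pair $(\mathcal{LNP}^{b,\nu}_2,\mathcal{LNP}^{-b,-\nu}_1)$, because there the two regimes of $\mathbb{Z}$ genuinely swap sides. But for the pairs $(\mathcal{NP}_6,\mathcal{NP}_3)$, $(\mathcal{NP}_7,\mathcal{NP}_4)$, $(\mathcal{NP}_8,\mathcal{NP}_5)$ the piecewise regimes are the \emph{same} subsets of $\mathbb{Z}$ (e.g.\ $\{m\geqslant 2\}$ versus $\{m\leqslant 1\}$ with distinguished index $2$ or $3$ in both members of the pair); only the sign of the coefficient is flipped on each regime. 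The reflection $L_m\mapsto L_{-m}$ sends $\{m\geqslant 2\}$ to $\{m\leqslant -2\}$ and lands you in the $\mathcal{MP}$-shaped partition $\{m\leqslant -2\}\cup\{m\geqslant -1\}$, so the ``direct check'' you invoke fails if the target is taken to be $\mathcal{LNP}_3,\mathcal{LNP}_4,\mathcal{LNP}_5$ themselves: for instance $\{L_{-\nu},L_n\}_6=nbL_n-(n+\nu)L_{n-\nu}$ for $n\geqslant 2$ cannot be matched to $\mathcal{LNP}_3$ by any index reflection. The correct map for these three pairs is the negation $L_m\mapsto -L_m$ together with $b\mapsto -b$ (exactly the map used in the proof of Proposition \ref{lie1} for $\mathcal{LP}_2\cong\mathcal{LP}_1$ and $\mathcal{LP}_6\cong\mathcal{LP}_5$), which flips the overall sign of the bracket while fixing the regimes. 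Your argument can be repaired either by substituting negation for reflection in those three cases, or by noting that reflection equals negation composed with the Witt automorphism $\tau(L_m)=-L_{-m}$ and hence carries $\mathcal{LNP}_i$ onto the already-identified copy $\mathcal{LMP}_i$ of the listed representative; but as written the claim that the reflection intertwines the brackets of the stated pairs is false.
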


\begin{proof}
The proof is similar to Proposition \ref{lie1}.
\end{proof}

\section{ Another understanding of the Post-Lie algebra structures}

We should see that there were two different definitions of post-Lie algebra structure, that is {\it post-Lie algebra structure on a Lie algebra} and {\it post-Lie algebra structure on  pairs of Lie algebras}. The former is studied as above. Now we recall the latter as follows.

\begin{definition}\label{defi[]{}} \cite{Burde2,Burde3}
Let $(V_1,[,])$ and $(V_2, \{,\})$ be a pair of Lie algebras on the same linear space $V=V_1=V_2$ over a field $k$.
A post-Lie algebra structure on the pair $(V_1, V_2)$ is a
$k$-bilinear product $x\circ y$ on $V$ satisfying the following identities:
\begin{eqnarray*}
&& <x,y> = \{x,y\}-[x,y], \label{post5}\\
&& \{x, y\} \circ z =  x \circ (y \circ z)-y \circ (x \circ z), \label{post6}\\
&& x\circ [y, z] = [x\circ y, z] + [y, x \circ z] \label{post7}
\end{eqnarray*}
for all $x, y, z \in V$, where $<x,y>=x\circ y-y\circ x$. We also say that $(V_1, V_2, \circ, [ , ], \{,\})$ is a post-Lie algebra.
\end{definition}

Inspired by the above definitions, here we would like to give another definition of a post-Lie algebra as follows. Below we will see that the three definitions of post-Lie algebra are equivalent.
\begin{definition}\label{defi{}}
A post-Lie algebra $(V, \circ, \{ , \})$ is a vector space $V$ over a field $k$ equipped with two
$k$-bilinear operations $x\circ y$ and $\{x, y\}$, such that $(V, \{, \})$ is a Lie algebra and
\begin{eqnarray}
&& \{x, y\} \circ z =  x \circ (y \circ z)-y \circ (x \circ z), \label{post3}\\
&& x\circ \{y, z\}- \{x\circ y, z\}- \{y, x \circ z\} = x\circ <y, z>-<x\circ y, z> - <y, x \circ z> \label{post4}
\end{eqnarray}
for all $x, y, z \in V$, where $<x,y>=x\circ y-y\circ x$. We also say that  $(V, \circ, \{ , \})$ is a post-Lie algebra structure on Lie algebra $(V, \{, \})$.
\end{definition}

\begin{proposition}\label{equ2}
A post-Lie algebra $(V, \circ, \{, \})$ defined by Definition \ref{defi{}} with
the following operation
$$
[x, y] \triangleq \{x, y\}-<x,y>.
$$
defines an another Lie algebra structure on $V$, where $<x,y>=x\circ y-y\circ x$.
\end{proposition}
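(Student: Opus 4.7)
The plan is to check the two Lie-algebra axioms for $[x,y] := \{x,y\} - <x,y>$ directly. Antisymmetry is immediate, since $\{,\}$ is already a Lie bracket and $<y,x> = y\circ x - x\circ y = -<x,y>$; hence $[y,x] = -[x,y]$. All of the content lies in the Jacobi identity.

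First I would extract two operational reformulations of the hypotheses. Writing $L(x)y := x\circ y$, equation (\ref{post3}) is the operator identity $L(\{x,y\}) = [L(x), L(y)]$, equivalently $\{x,y\}\circ z = L(x)L(y)z - L(y)L(x)z$. Second, substituting $\{u,v\} - <u,v> = [u,v]$ into (\ref{post4}) and simplifying shows that (\ref{post4}) is equivalent to $L(x)$ being a derivation of the candidate bracket $[,]$, i.e., $x\circ [y,z] = [x\circ y, z] + [y, x\circ z]$. These two statements play the role here that (\ref{post1})--(\ref{post2}) play in Definition~\ref{defi[]}.

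Next I would expand $[x,[y,z]] = \{x, [y,z]\} - <x, [y,z]>$, insert $[y,z] = \{y,z\} - <y,z>$, and sum cyclically over $(x,y,z)$ to obtain
\begin{equation*}
J := \sum_{\text{cyc}}[x, [y,z]] = \sum_{\text{cyc}}\{x, \{y,z\}\} - \sum_{\text{cyc}}\{x, <y,z>\} - \sum_{\text{cyc}} x\circ [y,z] + \sum_{\text{cyc}} [y,z]\circ x.
\end{equation*}
The first summand vanishes by Jacobi for $\{,\}$. The third summand, via the derivation property together with antisymmetry of $[,]$ and cyclic relabeling, collapses to $\sum_{\text{cyc}}[<x,y>, z]$; expanding that by the definition of $[,]$ and using $\sum_{\text{cyc}}\{<x,y>,z\} = -\sum_{\text{cyc}}\{x,<y,z>\}$ (antisymmetry of $\{,\}$ plus cyclic relabeling) makes the $\{,\}$ pieces cancel against the second summand. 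The fourth summand, by (\ref{post3}) and linearity, splits as $\sum_{\text{cyc}}[L(x), L(y)]z - \sum_{\text{cyc}} <x,y>\circ z$, and the second piece here cancels what is left from the third.

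After these bookkeeping cancellations, $J$ reduces to $\sum_{\text{cyc}}[L(x), L(y)]z - \sum_{\text{cyc}} z\circ <x,y>$. Verifying this vanishes is the main obstacle, but it is not a deep one: expanding $<x,y> = L(x)y - L(y)x$ on the right and the commutator on the left, one finds the same six monomials $L(u)L(v)w$ on each side, matching term by term, with no further hypothesis needed. This gives $J = 0$, so $[,]$ is a Lie bracket and $(V, [,])$ is a Lie algebra as claimed.
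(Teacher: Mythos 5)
Your proof is correct. The paper itself offers no proof of Proposition \ref{equ2} (it is stated without justification, in the same spirit as Proposition \ref{equ1}, which the author dismisses as ``not difficult to verify''), so there is no argument in the text to compare against; your direct verification fills that gap. The two key reductions both check out: substituting $[u,v]=\{u,v\}-<u,v>$ into (\ref{post4}) does show that each $\mathcal{L}(x)$ is a derivation of the candidate bracket $[\,,\,]$, and the cyclic bookkeeping correctly collapses the Jacobi sum to $\sum_{\mathrm{cyc}}[\mathcal{L}(x),\mathcal{L}(y)]z-\sum_{\mathrm{cyc}}z\circ<x,y>$, whose six monomials of the form $u\circ(v\circ w)$ on each side agree term by term with no further hypothesis. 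This is essentially the mirror image of the standard verification of Proposition \ref{equ1}, which is presumably what the author had in mind.
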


\begin{proposition}\label{equequ}
Definitions \ref{defi[]},  \ref{defi[]{}} and \ref{defi{}} of the notion of post-Lie algebra are equivalent.
\end{proposition}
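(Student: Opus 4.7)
I would prove equivalence by a cyclic chain \ref{defi[]} $\Rightarrow$ \ref{defi[]{}} $\Rightarrow$ \ref{defi{}} $\Rightarrow$ \ref{defi[]}, where at each step the two brackets are related by the standard translation
\[
\{x,y\} = [x,y]+\langle x,y\rangle, \qquad [x,y] = \{x,y\}-\langle x,y\rangle.
\]
In each direction the common operation $\circ$ is kept fixed, and a Lie bracket on $V$ is transformed into another one by adding or subtracting the commutator $\langle x,y\rangle = x\circ y-y\circ x$. Proposition \ref{equ1} already gives half of the bracket-to-bracket passage (and I will establish Proposition \ref{equ2} along the way), so the main task is to check that the remaining axioms translate correctly under this substitution.

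For \ref{defi[]} $\Rightarrow$ \ref{defi[]{}}, I start from a post-Lie algebra $(V,\circ,[,])$ in the sense of Definition \ref{defi[]} and set $\{x,y\}:=[x,y]+\langle x,y\rangle$. Proposition \ref{equ1} tells us that $(V,\{,\})$ is a Lie algebra, and the defining relation $\langle x,y\rangle=\{x,y\}-[x,y]$ is built in. The derivation axiom (\ref{post2}) is inherited unchanged, and substituting $[x,y]=\{x,y\}-\langle x,y\rangle$ into (\ref{post1}) immediately yields $\{x,y\}\circ z=x\circ(y\circ z)-y\circ(x\circ z)$. The implication \ref{defi[]{}} $\Rightarrow$ \ref{defi{}} is pure book-keeping: the product rule for $\{y,z\}$ can be rewritten as $x\circ\{y,z\}-\{x\circ y,z\}-\{y,x\circ z\}=x\circ\langle y,z\rangle-\langle x\circ y,z\rangle-\langle y,x\circ z\rangle$, which is exactly (\ref{post4}), because the portion $x\circ[y,z]-[x\circ y,z]-[y,x\circ z]$ vanishes by the derivation axiom of Definition \ref{defi[]{}}.

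For the closing step \ref{defi{}} $\Rightarrow$ \ref{defi[]}, given $(V,\circ,\{,\})$ I set $[x,y]:=\{x,y\}-\langle x,y\rangle$. The first axiom of Definition \ref{defi[]} follows immediately from (\ref{post3}): $[x,y]\circ z=\{x,y\}\circ z-\langle x,y\rangle\circ z=x\circ(y\circ z)-y\circ(x\circ z)-\langle x,y\rangle\circ z$. The second axiom follows by expanding $x\circ[y,z]-[x\circ y,z]-[y,x\circ z]$ in the $\{,\}$-form and subtracting; the result is precisely the content of (\ref{post4}), so it vanishes.

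The one piece that is not mere substitution is the Jacobi identity for the transformed bracket, which I would first isolate as Proposition \ref{equ2}. To prove it, one expands the cyclic sum $\sum_{\mathrm{cyc}}[[x,y],z]$ with $[u,v]=\{u,v\}-\langle u,v\rangle$; after cancelling the Jacobi identity of $\{,\}$ one is left with a sum of terms involving $\circ$ that must be shown to vanish. This reduction uses (\ref{post3}) to rewrite the triple $\circ$-associators and (\ref{post4}) to handle the mixed terms of the form $x\circ\langle y,z\rangle$. This compatibility calculation is the only substantive obstacle; once it is carried out, Proposition \ref{equ2} is secured and the three definitions are seen to be rearrangements of the same underlying structure.
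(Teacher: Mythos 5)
Your proposal is correct and follows essentially the same route as the paper: both arguments translate between the brackets via $\{x,y\}=[x,y]+\langle x,y\rangle$ and $[x,y]=\{x,y\}-\langle x,y\rangle$, invoke Propositions \ref{equ1} and \ref{equ2} for the Lie-algebra structure of the transformed bracket, and verify the remaining axioms by direct substitution (your explicit check that (\ref{post2}) for $[\,,\,]$ is equivalent to (\ref{post4}) for $\{\,,\,\}$ is exactly the content of the paper's passage through Definition \ref{defi[]{}}). If anything, your cyclic organization and your flagging of the Jacobi identity for the transformed bracket as the one nontrivial computation is more explicit than the paper, which leaves both Propositions \ref{equ1} and \ref{equ2} unproved.
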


\begin{proof}
 If $(V, \circ, [, ])$ is a post-Lie algebra defined by Definition \ref{defi[]}, then by Proposition \ref{equ1} we know that under the Lie bracket $\{x, y\} =<x,y> + [x, y]$, $V$ admits a new Lie algebra structure. Obviously, $(V, \circ, \{, \})$ satisfies (\ref{post3}) and (\ref{post4}). Conversely, when $(V, \circ, \{, \})$  is a post-Lie algebra defined by Definition \ref{defi{}}, then by Proposition \ref{equ2} we know that under the Lie bracket $[x, y] =\{x,y\} - <x, y>$, $V$ also admits a new Lie algebra structure. We are able to verily that $(V, \circ, [, ])$ satisfies (\ref{post1}) and (\ref{post2}). This tell us that whether $(V, \circ, [, ])$ or $(V, \circ, \{, \})$  all are connotations of two Lie algebras structures, which satisfy the conditions of Definitions \ref{defi[]{}}. On the other hand, a post-Lie algebra structure on the pair $(V_1, V_2)$ defined by Definitions \ref{defi[]{}} imply $(V_1, \circ, [, ])$ satisfying (\ref{post1}) and (\ref{post2}) or $(V_2, \vartriangleright, \{, \})$ satisfying (\ref{post3}) and (\ref{post4}).
\end{proof}

\begin{remark} \label{remark2}
 Recall that the left multiplications of the algebra $A = (V,\circ)$ are denoted by $\mathcal{L}(x)$, i.e., we have $\mathcal{L}(x)(y) = x\circ y $ for all $x,y\in V$.  Clearly, by (\ref{post3}) we see that the map $\mathcal{L}: V \rightarrow End(V )$ given by $x\mapsto \mathcal{L}(x)$ is a linear representation of the Lie algebra
  $(V,\{, \})$ when $(V,\circ, \{,\})$ is the post-Lie algebra defined by Definition \ref{defi{}}.
\end{remark}

It can be seen that the study of post-Lie algebra structures on pairs of Lie algebras given by Definition \ref{defi[]{}} is divided into two directions: either when $(V_1, [,])$ is a given Lie algebra, to determine the product $\circ$; or when $(V_2, \{,\})$ is a given Lie algebra, to determine the product $\circ$. By Proposition \ref{equequ}, the first direction is characterizing of the post-Lie algebra structures  $(V_1, [,], \circ)$ on the Lie algebra $(V_1, [,])$ given by Definition \ref{defi[]}, and another direction is characterizing of the post-Lie algebra structure  $(V_2, \{,\}, \circ)$ on the Lie algebra $(V_2, \{,\})$ given by Definition \ref{defi{}}. For the first case in which $V_1=W$ is the Witt algebra, the graded or some shifting post-Lie algebra structures are studied in Sections \ref{grad} and \ref{nograd}. The problem of another case in which $V_2=W$ is the Witt algebra, should be interesting. We are not going to discuss this problem here. But, inspired by \cite{KCB,tangb2012}, we may give two non-trivial examples as follows.

\begin{example}
The following cases give two class of graded post-Lie algebra
structures on $W$ satisfying (\ref{mainequ1}) given by Definition \ref{defi{}}.
\begin{equation*} 
\phi(m,n)=-\frac{(\alpha+n+\alpha\epsilon m)(1+\epsilon
n)}{1+\epsilon(m+n)},\forall m,n\in \mathbb Z,\label{kcb-f1}
\end{equation*}
 where $\alpha, \epsilon\in
\mathbb{C}$ satisfy $\epsilon=0$ or $\epsilon^{-1}\not\in
\mathbb{Z}$, or
\begin{equation*} 
\phi(m,n)= \left\{\begin{array}{ll} -n-t, & {\mbox{if}}\  m+n+t\neq 0,\\
 \frac{(n+t)(n+t-\beta)}{\beta-t},&{\mbox{if}}\ m+n+t=
 0,\end{array}\right.\forall m,n\in \mathbb Z,\label{kcb-f2}
\end{equation*}
where $\beta \in \mathbb{C}$ and $t\in \mathbb{Z}$ satisfy
$\beta\neq t$.
\end{example}

\begin{example}
Let $\phi(m,n)=-(n+\alpha)$ and $\varrho(m,n)=\mu$ in (\ref{mainequ11}), where $\alpha, \mu\in \mathbb{C}$. This is a shifting post-Lie algebra structure on the Witt algebra given by Definition \ref{defi{}} as follows.
$$
L_m\circ L_n=-(n+\alpha)L_{m+n}+\mu L_{m+n+\nu}.
$$
\end{example}

By Propositions \ref{lie1} and \ref{lie2}, we find $7$ classes of Lie algebras up to isomorphism. They are $\mathcal{LP}_{1}$, $\mathcal{LP}^a_{3}$, $\mathcal{LP}_{5}$, $\mathcal{LNP}^{b,\nu}_1$, $\mathcal{LNP}^{b,\nu}_3$, $\mathcal{LNP}^{b,\nu}_4$  and $\mathcal{LNP}^{b,\nu}_5$. Note that
 Remark \ref{remark2} tells us that the post-Lie $(L, \circ, \{,\})$ admits a module structure of Lie algebra $(L, \{,\})$. Thus, by Theorems \ref{mmgg} and \ref{postnon1}, we have the following results on module structures of some Lie algebras.

\begin{proposition}
Let $\mathcal{V}={\rm Span}_{\mathbb{C}}\{v_i|i\in \mathbb{Z}\}$ be a linear vector space over the complex number field.
Then $\mathcal{V}$  becomes a module over some Lie algebras under the following acts.

{\rm (1)} For the Lie algebra $\mathcal{LP}_{1}$: $L_m . v_n=0, \ \forall m,n\in \mathbb{Z}$.

{\rm (2)} For the Lie algebra $\mathcal{LP}^a_{3}$:

\ \ \ \ $L_m . v_n=\begin{cases}
(n-m)v_{m+n}, &  m> 0,\\
nav_n, & m=0, \\
0, & m<0;
\end{cases}
$

{\rm (3)} For the Lie algebra $\mathcal{LP}_5$:

\ \ \ \ $L_m . v_n=\begin{cases}
(n-m)v_{m+n}, & m\geqslant 2,\\
0, & m\leqslant 1;
\end{cases}
$

 {\rm (4)} For the Lie algebra $\mathcal{LNP}^{b,\nu}_1$: $\nu=1$ or $2$,

 \ \ \ \  $
L_m . v_n=
\begin{cases}
(n-m)v_{m+n}, \ \ & m\geqslant 0,\\
nbv_{n}, & m=-\nu,
\\
0, & m<0,m\neq -\nu;
\end{cases}
$

{\rm (5)} For the Lie algebra $\mathcal{LNP}^{b,\nu}_3$: $\nu=-2, -3$ or $-4$,

\ \ \ \  $
L_m . v_n=
\begin{cases}
(n-m)v_{m+n}, \ \ & m\geqslant 2,  \ m\neq -\nu\\
(n+\nu)L_{n-\nu}+nbL_{n}, \ \ & m=-\nu,\\
0, & m\leqslant 1;
\end{cases}
$

{\rm (6)} For the Lie algebra $\mathcal{NP}^{b,\nu}_4$:

\ \ \ \  $
L_m. v_n=
\begin{cases}
(n-m)v_{m+n}, \ \ & m\geqslant 3,\\
(n-2)v_{n+2}+b(n-1)v_{n+1}, \ \ & m=2,\\
0, & m\leqslant 1;
\end{cases}
$

{\rm (7)} For the Lie algebra $\mathcal{NP}^{b,\nu}_5$:

\ \ \ \ $L_m . v_n=
\begin{cases}
(n-m)v_{m+n}, \ \ & m\geqslant 4,\\
(n-2)v_{n+2}+nbv_{n}, \ \ & m= 2,\\
(n-3)v_{n+3}+2(n-1)bv_{n+1}, \ \ & m=3,\\
0, & m\leqslant 1,
\end{cases}
$\\
where $a,b\in \mathbb{C}$ with $b\neq 0$.
\end{proposition}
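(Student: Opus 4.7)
The plan is to deduce the module structures directly from the classifications of post-Lie algebra structures in Theorems \ref{mmgg} and \ref{postnon1}, via the equivalence of the two notions of post-Lie algebra (Definitions \ref{defi[]} and \ref{defi{}}) established in Proposition \ref{equequ}, together with the remark that the left multiplication of a post-Lie algebra in the sense of Definition \ref{defi{}} is a representation of the ambient Lie algebra (Remark \ref{remark2}).

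More concretely, I would first observe that each post-Lie algebra $(W,\circ,[,])$ appearing in Theorems \ref{mmgg} and \ref{postnon1} induces, via Proposition \ref{equ1}, a new Lie bracket $\{x,y\}=\langle x,y\rangle+[x,y]$ on $W$; up to isomorphism, these induced brackets recover exactly the seven Lie algebras $\mathcal{LP}_1$, $\mathcal{LP}_3^a$, $\mathcal{LP}_5$, $\mathcal{LNP}^{b,\nu}_1$, $\mathcal{LNP}^{b,\nu}_3$, $\mathcal{LNP}^{b,\nu}_4$, $\mathcal{LNP}^{b,\nu}_5$ listed in Propositions \ref{lie1} and \ref{lie2}. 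By Proposition \ref{equequ}, the triple $(W,\circ,\{,\})$ is a post-Lie algebra in the sense of Definition \ref{defi{}}. Applying Remark \ref{remark2} to this reformulation, the left multiplication $\mathcal{L}(x)(y)=x\circ y$ satisfies $\mathcal{L}(\{x,y\})=\mathcal{L}(x)\mathcal{L}(y)-\mathcal{L}(y)\mathcal{L}(x)$, which is precisely identity (\ref{post3}); hence $\mathcal{L}$ is a linear representation of the Lie algebra $(W,\{,\})$ on the underlying vector space $W$.

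Next, I would transport this representation along the $\mathbb{C}$-linear isomorphism $W\to\mathcal{V}$ sending $L_i\mapsto v_i$. Defining $L_m.v_n$ to be the vector in $\mathcal{V}$ obtained from $L_m\circ L_n\in W$ under this identification, one immediately obtains a module action of the Lie algebra in question on $\mathcal{V}$. Reading off the formulas from the corresponding post-Lie products in Theorems \ref{mmgg} and \ref{postnon1} then yields the seven actions listed: (1) comes from $\mathcal{P}_1$, (2) from $\mathcal{P}_3^a$, (3) from $\mathcal{P}_5$, (4) from $\mathcal{NP}^{b,\nu}_1$, (5) from $\mathcal{NP}^{b,\nu}_3$, (6) from $\mathcal{NP}^{b,\nu}_4$, and (7) from $\mathcal{NP}^{b,\nu}_5$.

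There is no substantive obstacle here: the module axiom is not re-verified from scratch but is inherited through the chain Definition \ref{defi[]} $\Rightarrow$ Definition \ref{defi{}} $\Rightarrow$ Remark \ref{remark2}. The only genuine work is the bookkeeping step of matching each of the seven Lie algebras to the post-Lie algebra that induces it and then rewriting the product $L_m\circ L_n$ as an action on the basis $\{v_n\}_{n\in\mathbb{Z}}$, which is a case-by-case but entirely mechanical substitution using the explicit tables in Theorems \ref{mmgg} and \ref{postnon1}.
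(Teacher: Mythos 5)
Your argument is correct and follows essentially the same route the paper itself takes: the paper justifies this proposition only by the paragraph preceding it, which invokes Remark \ref{remark2} (left multiplication is a representation of the induced Lie algebra $(W,\{,\})$, via identity (\ref{post3})) together with Theorems \ref{mmgg} and \ref{postnon1}, and then reads off the actions on the basis $\{v_i\}$. Your more explicit tracing of the chain through Propositions \ref{equ1} and \ref{equequ} is simply a fuller write-up of that same reasoning.
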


\section{Application to Rota-Baxter operators}

Now let us recall the definition of Rota-Baxter operator.

\begin{definition}  Let $L$ be a complex Lie algebra. A Rota-Baxter operator of weight $\lambda \in \mathbb{C}$ is a linear map $R: L\rightarrow L$ satisfying
\begin{equation}\label{def-rbalgebra}
[R(x),R(y)]=R([R(x),y]+[x,R(y)])+\lambda R([x,y]), \forall x,y\in L.
\end{equation}
\end{definition}
  Note that if $R$ is a Rota-Baxter operator of weight $\lambda\neq 0$, then $\lambda^{-1}R$ is a Rota-Baxter operator of weight $1$. Therefore, one only needs to consider Rota-Baxter operators of weight $0$ and $1$.

\begin{lemma}\label{rota} \cite{BGN}
Let $L$ be a complex Lie algebra  and $R : L\rightarrow L$ a Rota-Baxter operator of weight $1$. Define a new operation
$x\circ y$ = $[R(x), y]$ on $L$. Then $(L, [, ], \circ)$ is a post-Lie algebra given by Definition \ref{defi[]}.
\end{lemma}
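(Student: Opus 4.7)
The plan is to verify directly the two defining identities \eqref{post1} and \eqref{post2} of Definition \ref{defi[]} for the operation $x\circ y=[R(x),y]$, using only the Jacobi identity in $(L,[,])$ and the Rota-Baxter identity \eqref{def-rbalgebra} of weight $1$. Since $R$ is only a linear map (not a Lie algebra homomorphism), the main structural input is that $R$ converts the defect of \eqref{def-rbalgebra} into a controlled correction term; the game is to see that this correction precisely accounts for the $\langle x,y\rangle\circ z$ piece of \eqref{post1}.

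I would dispose of \eqref{post2} first, as it is immediate. Expanding, $x\circ [y,z]=[R(x),[y,z]]$, while $[x\circ y,z]+[y,x\circ z]=[[R(x),y],z]+[y,[R(x),z]]$. These agree by the Jacobi identity in $L$ applied to the triple $(R(x),y,z)$. No appeal to the Rota-Baxter condition is needed here.

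The substantive step is \eqref{post1}. The left-hand side is $[x,y]\circ z=[R([x,y]),z]$. For the right-hand side, first rewrite $\langle x,y\rangle=x\circ y-y\circ x=[R(x),y]-[R(y),x]=[R(x),y]+[x,R(y)]$, so that
\[
\langle x,y\rangle\circ z=[R([R(x),y]+[x,R(y)]),z].
\]
By the weight-$1$ Rota-Baxter identity \eqref{def-rbalgebra}, $R([R(x),y]+[x,R(y)])=[R(x),R(y)]-R([x,y])$, hence
\[
\langle x,y\rangle\circ z=[[R(x),R(y)],z]-[R([x,y]),z].
\]
On the other hand,
\[
x\circ(y\circ z)-y\circ(x\circ z)=[R(x),[R(y),z]]-[R(y),[R(x),z]]=[[R(x),R(y)],z]
\]
by Jacobi. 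Subtracting, the $[[R(x),R(y)],z]$ terms cancel and one is left with $[R([x,y]),z]$, which is exactly the left-hand side. Thus \eqref{post1} holds.

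I do not expect any real obstacle: the two-line cancellation above is the only place the Rota-Baxter axiom intervenes, and the weight $\lambda=1$ is essential for producing the $+[R([x,y]),z]$ that matches the LHS (a different weight would leave a residual scalar multiple of $[R([x,y]),z]$ and spoil the identity). If one wanted to handle arbitrary weight $\lambda$ simultaneously, one would obtain instead a post-Lie algebra structure on the modified Lie bracket $\lambda[x,y]$, but for the statement as written only the case $\lambda=1$ needs to be recorded.
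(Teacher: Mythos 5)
Your verification is correct and complete: the paper itself gives no proof of this lemma (it is quoted from the reference [BGN]), and your direct computation — Jacobi for (\ref{post2}), and the weight-$1$ Rota–Baxter identity to rewrite $R(\langle x,y\rangle)=[R(x),R(y)]-R([x,y])$ so that the $[[R(x),R(y)],z]$ terms cancel in (\ref{post1}) — is exactly the standard argument. Nothing to add.
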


In this section, by use of Lemma \ref{rota}, Theorems \ref{mmgg} and \ref{postnon1}, we mainly consider the homogeneous Rota-Baxter operator of weight $1$ on the Witt algebra $W$ such that
$R: W\rightarrow W$ given by
\begin{equation}\label{r1}
R(L_m)=f(m)L_m, \ \forall m\in \mathbb{Z}
\end{equation}
and the non-homogeneous Rota-Baxter operator of weight $1$ on the Witt algebra $W$ such that
\begin{equation}\label{r2}
R(L_m)=f(m)L_m+g(m)L_{m+\nu}, \ \forall m\in \mathbb{Z}
\end{equation}
where $\nu$ is a nonzero integer number and $f,g$ are complex-valued functions on $\mathbb{Z}$ with $g\neq 0$.

Up till now,  the authors \cite{Galb} have presented the homogeneous Rota-Baxter operators on the Witt algebras. Inspired by this, we will prove the following.

\begin{theorem}\label{rb1}
A homogeneous Rota-Baxter operator of weight $1$ satisfying (\ref{r1}) on the Witt algebra $W$ must be one of the
following types

{\rm ($\mathcal{R}_1$):}
$
R(L_m)=0, \forall m\in \mathbb{Z};
$

{\rm ($\mathcal{R}_2$):}
$
R(L_m)=-L_{m}, \forall m\in \mathbb{Z};
$

{\rm ($\mathcal{R}^a_3$):}
$
R(L_m)=
\begin{cases}
-L_{m}, \ \ & m>0,  \\
aL_{0}, \ \ & m=0,\\
0, & m< 1,
\end{cases}
$

{\rm ($\mathcal{R}^a_4$):}
$
R(L_m)=
\begin{cases}
-L_{m}, \ \ & m< 0,\\
aL_{0}, \ \ & m=0,\\
0, & m>0,
\end{cases}
$

{\rm ($\mathcal{R}_5$):}
$R(L_m)=
\begin{cases}
-L_{m}, \ \ & m\geqslant 2,\\
0, \ \ & m\leqslant 1;
\end{cases}
$

{\rm ($\mathcal{R}_6$):}  $
R(L_m)=
\begin{cases}
-L_{m}, & m\leqslant 1,\\
0, \ \ & m\geqslant 2;
\end{cases}
$

{\rm ($\mathcal{R}_7$):}
$
R(L_m)=
\begin{cases}
-L_{m}, & m\geqslant -1,\\
0, \ \ & m\leqslant -2;
\end{cases}
$

{\rm ($\mathcal{R}_8$):}  $
R(L_m)=
\begin{cases}
-L_{m}, & m\leqslant -2,\\
0, \ \ & m\geqslant -1,
\end{cases}
$\\
where $a\in \mathbb{C}$.
\end{theorem}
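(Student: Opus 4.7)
The plan is to leverage Lemma \ref{rota} to reduce the classification of homogeneous Rota-Baxter operators of weight $1$ on $W$ to the already-established classification of graded post-Lie algebra structures in Theorem \ref{mmgg}. The idea is that a homogeneous $R$ produces a graded induced product $\circ$ of exactly the form (\ref{mainequ1}), so Theorem \ref{mmgg} will pin down the coefficient function $f$.

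First, I would take an arbitrary homogeneous Rota-Baxter operator $R(L_m)=f(m)L_m$ of weight $1$ and compute the induced operation $L_m\circ L_n:=[R(L_m),L_n]$. Bracketing in $W$ yields
\[
L_m\circ L_n=[f(m)L_m,L_n]=(m-n)f(m)L_{m+n},
\]
which matches the ansatz (\ref{mainequ1}) with $\phi(m,n)=(m-n)f(m)$. By Lemma \ref{rota} the triple $(W,[,],\circ)$ is a post-Lie algebra; since it is graded, Theorem \ref{mmgg} forces $\circ$ to be one of the structures $\mathcal{P}_1,\ldots,\mathcal{P}_8$. Reading off $f$ from each $\mathcal{P}_i$ (e.g.\ $f(m)=-1$ wherever the bracket equals $(n-m)L_{m+n}$, $f(0)=a$ in the cases where $L_0\circ L_n=naL_n$, and $f(m)=0$ wherever the product vanishes) reproduces precisely the eight operators $\mathcal{R}_1,\ldots,\mathcal{R}_8$ in the statement.

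For the converse, I would verify directly that each $\mathcal{R}_i$ satisfies the Rota-Baxter identity (\ref{def-rbalgebra}). For a homogeneous $R(L_m)=f(m)L_m$, evaluating both sides of (\ref{def-rbalgebra}) on basis elements $L_m,L_n$ collapses to the single scalar identity
\[
(m-n)f(m)f(n)=(m-n)\bigl(f(m)+f(n)+1\bigr)f(m+n),\quad\forall m,n\in\mathbb{Z},
\]
which is, up to an overall sign, exactly the functional equation (\ref{f(mn)}) appearing in Lemma \ref{ffmm}. Each $\mathcal{P}_i$ in Theorem \ref{mmgg} was characterized by producing an $f$ satisfying (\ref{f(mn)}), so the corresponding $R$ automatically satisfies (\ref{def-rbalgebra}).

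The key conceptual point is that the Rota-Baxter condition for homogeneous $R$ is literally the same scalar constraint on $f$ as the post-Lie axiom (\ref{f(mn)}); hence the two classification problems coincide. The main obstacle has already been overcome inside Theorem \ref{mmgg} (the casework on the pattern of $f\in\{0,-1\}$ subject to (i) and (ii) of Lemma \ref{IJ}); the only fresh step here is the identification of the Rota-Baxter equation with (\ref{f(mn)}), which is routine bilinear computation.
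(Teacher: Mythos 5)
Your proposal is correct and follows essentially the same route as the paper: apply Lemma \ref{rota} to turn $R$ into the graded product $L_m\circ L_n=(m-n)f(m)L_{m+n}$, invoke Theorem \ref{mmgg} to pin down $f$, and check the converse directly. Your explicit observation that the Rota--Baxter identity for homogeneous $R$ reduces to the scalar equation $(m-n)f(m)f(n)=(m-n)(f(m)+f(n)+1)f(m+n)$, which is literally (\ref{f(mn)}) (not merely up to sign), just fills in the ``easy to verify'' step the paper leaves to the reader.
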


\begin{proof}
Due to Lemma \ref{rota}, if we define a new operation $x\circ y = [R(x), y]$ on $W$, then $(W, [,], \circ)$ is  a post-Lie algebra. By (\ref{r1}) we have
$$
L_m\vartriangleright L_n=[R(L_m), L_n]=(m-n)f(m)L_{m+n}, \ \forall m,n\in \mathbb{Z}.
$$
This means that $(W, [,], \vartriangleright)$ is a graded post-Lie algebra structure satisfying (\ref{mainequ1}) on $W$ with $\phi(m,n)=(m-n)f(m)$.
By Theorem \ref{mmgg}, we see that $f$ must be one of the eight cases listed in Table 1, which can be get the eight forms of $R$ one by one.
On the other hand, it is easy to verify that every form of $R$ listed in the above is a Rota-Baxter operator of weight $1$ satisfying (\ref{r1}).
The proof is completed.
\end{proof}

\begin{theorem}\label{rb2}
 A non-homogeneous Rota-Baxter operator of weight $1$ satisfying (\ref{r2}) on the Witt algebra $W$ must be one of the
following types

 {\rm ($\mathcal{NR}^b_1$):}  $\nu=1$ or $2$,

 \ \ \ \ \ \ $
R(L_m)=
\begin{cases}
-L_{m}, \ \ & m\geqslant 0,\\
-bL_{0}, & m=-\nu,
\\
0, & m<0,m\neq -\nu;
\end{cases}
$

{\rm ( $\mathcal{NR}^b_2$):} $\nu=-1$ or $-2$,

\ \ \ \ \ \  $
R(L_m)=
\begin{cases}
-L_{m}, \ \ & m> 0, m\neq -\nu,\\
-L_{-\nu}-bL_{0}, \ \ & m=-\nu,\\
0, & m\leqslant 0;
\end{cases}
$

{\rm ($\mathcal{NR}^b_3$):} $\nu=-2, -3$ or $-4$,

\ \ \ \ \ \ $
R(L_m)=
\begin{cases}
-L_{m}, \ \ & m\geqslant 2,  \ m\neq -\nu\\
-L_{-\nu}-bL_{0}, \ \ & m=-\nu,\\
0, & m\leqslant 1;
\end{cases}
$

{\rm ($\mathcal{NR}^b_4$):}
$
R(L_m)=
\begin{cases}
-L_{m}, \ \ & m\geqslant 3,\\
-L_{2}-bL_{1}, \ \ & m=2,\\
0, & m\leqslant 1;
\end{cases}
$

{\rm ($\mathcal{NR}^b_5$):}  $
R(L_m)=
\begin{cases}
-L_{m}, \ \ & m\geqslant 4,\\
-L_{2}-bL_{0}, \ \ & m= 2,\\
-L_{3}-2bL_{1}, \ \ & m=3,\\
0, & m\leqslant 1;
\end{cases}
$

{\rm ($\mathcal{NR}^b_6$):}  $\nu=-2, -3$ or $-4$,

\ \ \ \ \ \ $
R(L_m)=
\begin{cases}
-L_{m}, & m\leqslant 1,\\
-bL_0, \ \ & m=-\nu,\\
0, \ \ & m\geqslant 2, m\neq -\nu;
\end{cases}
$

{\rm ($\mathcal{NR}^b_7$):}
$
R(L_m)=
\begin{cases}
-L_{m}, & m\leqslant 1,\\
-bL_{1}, \ \ & m=2,\\
0, \ \ & m\geqslant 3;
\end{cases}
$

{\rm ( $\mathcal{NR}^b_8$):} $
R(L_m)=
\begin{cases}
-L_{m}, & m\leqslant 1,\\
-bL_{0}, \ \ & m=2,\\
-2bL_{1}, \ \ & m=3,\\
0, \ \ & m\geqslant 4;
\end{cases}
$

{\rm ($\mathcal{MR}^b_1$):}  $\nu=-1$ or $-2$,

 \ \ \ \ \ \ $
R(L_m)=
\begin{cases}
-L_{m}, \ \ & m\leqslant 0,\\
-bL_{0}, & m=-\nu,\\
0, & m>0,m\neq -\nu;
\end{cases}
$

{\rm ($\mathcal{MR}^b_2$):}  $\nu=1$ or $2$,

\ \ \ \ \ \  $
R(L_m)=
\begin{cases}
-L_{m}, \ \ & m< 0, m\neq -\nu,\\
-L_{-\nu}-bL_{0}, \ \ & m=-\nu,\\
0, & m\geqslant 0;
\end{cases}
$

{\rm ($\mathcal{MR}^b_3$):}  $\nu=2, 3$ or $4$,

\ \ \ \ \ \ $
R(L_m)=
\begin{cases}
-L_{m}, \ \ & m\leqslant -2,  \ m\neq -\nu\\
-L_{-\nu}-bL_{0}, \ \ & m=-\nu,\\
0, & m\geqslant -1;
\end{cases}
$

{\rm ($\mathcal{MR}^b_4$):}
$
R(L_m)=
\begin{cases}
-L_{m}, \ \ & m\leqslant -3,\\
-L_{-2}-bL_{-1}, \ \ & m=-2,\\
0, & m\geqslant -1;
\end{cases}
$

{\rm ($\mathcal{MR}^b_5$):}  $
R(L_m)=
\begin{cases}
-L_{m}, \ \ & m\leqslant -4,\\
-L_{-2}-bL_{0}, \ \ & m= -2,\\
-L_{-3}-2bL_{-1}, \ \ & m=-3,\\
0, & m\geqslant -1;
\end{cases}
$

{\rm ($\mathcal{MR}^b_6$):}  $\nu=2, 3$ or $4$,

\ \ \ \ \ \ $
R(L_m)=
\begin{cases}
-L_{m}, & m\geqslant -1,\\
-bL_0, \ \ & m=-\nu,\\
0, \ \ & m\leqslant -2, m\neq -\nu;
\end{cases}
$

{\rm ($\mathcal{MR}^b_7$):}
$
R(L_m)=
\begin{cases}
-L_{m}, & m\geqslant -1,\\
-bL_{-1}, \ \ & m=-2,\\
0, \ \ & m\leqslant -3;
\end{cases}
$

{\rm ($\mathcal{MR}^b_8$):}  $
R(L_m)=
\begin{cases}
-L_{m}, & m\geqslant -1,\\
-bL_{0}, \ \ & m=-2,\\
-2bL_{-1}, \ \ & m=-3,\\
0, \ \ & m\leqslant -4,
\end{cases}
$\\
where $b$ is a non-zero number. Conversely,the above operators are all the non-homogeneous Rota-Baxter operators of weight $1$ satisfying (\ref{r2}) on the Witt algebra $W$.
\end{theorem}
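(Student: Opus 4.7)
The plan is to reduce Theorem \ref{rb2} entirely to the shifting post-Lie algebra classification of Theorem \ref{postnon1} via Lemma \ref{rota}, so no fresh computation of Rota--Baxter identities is needed. Given $R$ of the form (\ref{r2}), define $L_m\circ L_n=[R(L_m),L_n]$. A single direct computation using the Witt bracket gives
$$
L_m\circ L_n=[f(m)L_m+g(m)L_{m+\nu},L_n]=(m-n)f(m)L_{m+n}+(m-n+\nu)g(m)L_{m+n+\nu},
$$
which is precisely of the shifting form (\ref{mainequ11}) with $\phi(m,n)=(m-n)f(m)$ and $\varrho(m,n)=(m-n+\nu)g(m)$. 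Since $g\neq0$ we have $\varrho\neq0$, so Lemma \ref{rota} places $(W,[,],\circ)$ inside the class classified by Theorem \ref{postnon1}.

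Next I would invoke Theorem \ref{postnon1} to conclude that $(W,[,],\circ)$ must coincide with one of the sixteen shifting post-Lie algebras $\mathcal{NP}^{b,\nu}_i$ or $\mathcal{MP}^{b,\nu}_i$, $i=1,\dots,8$. For each of these, Table 2 already specifies the functions $f$ and $g$ and the set of permissible $\nu$. Substituting these $f,g,\nu$ back into $R(L_m)=f(m)L_m+g(m)L_{m+\nu}$ then produces the sixteen candidate operators $\mathcal{NR}^b_i,\mathcal{MR}^b_i$ stated in the theorem, case by case. The one bookkeeping subtlety is the index $m=-\nu$: there $L_{m+\nu}=L_0$ and so the two pieces $f(-\nu)L_{-\nu}$ and $g(-\nu)L_0$ must be read off together, which explains the compound formulas for $R(L_{-\nu})$ in types like $\mathcal{NR}^b_2$, $\mathcal{NR}^b_3$, $\mathcal{MR}^b_2$, $\mathcal{MR}^b_3$ (where $f(-\nu)=-1$) versus the single-term formulas in types like $\mathcal{NR}^b_1$, $\mathcal{MR}^b_1$ (where $f(-\nu)=0$).

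For the converse direction, the slick route is to observe that for $x\circ y=[R(x),y]$ on the Witt algebra, the post-Lie identity (\ref{post2}) is automatic from the Jacobi identity, while (\ref{post1}) unwinds, again using Jacobi, to the statement
$$
\bigl[\,R([x,y])-[R(x),R(y)]+R([R(x),y]+[x,R(y)]),\,z\,\bigr]=0\quad\text{for all }z\in W.
$$
Since the Witt algebra has trivial center, this bracket condition is equivalent to the weight-$1$ Rota--Baxter identity (\ref{def-rbalgebra}). So once we know from Theorem \ref{postnon1} that the $\circ$ attached to each of the sixteen candidate $R$'s is genuinely a post-Lie product, the Rota--Baxter property of $R$ follows for free, with no case-by-case verification required.

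I expect the only real obstacle to be the mechanical unpacking of Table 2 into the sixteen explicit Rota--Baxter formulas: tracking the allowed values of $\nu$ per row (e.g.\ $\nu\in\{1,2\}$ for $\mathcal{NR}^b_1$ but $\nu\in\{-2,-3,-4\}$ for $\mathcal{NR}^b_3$), and correctly combining the $f$- and $g$-contributions at $m=-\nu$. No new conceptual input is needed beyond the Witt-algebra centerless argument used to reverse Lemma \ref{rota}.
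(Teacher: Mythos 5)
Your proposal is correct and, for the forward direction, is exactly the paper's argument: define $x\circ y=[R(x),y]$, compute that this is a shifting structure with $\phi(m,n)=(m-n)f(m)$ and $\varrho(m,n)=(m-n+\nu)g(m)$, and read the sixteen forms of $R$ off Table 2 via Theorem \ref{postnon1}, minding the merged $f$- and $g$-contributions at $m=-\nu$. Where you genuinely depart from the paper is the converse: the paper simply asserts that each listed $R$ is ``easy to verify'' to be a Rota--Baxter operator, whereas you observe that for products of the form $[R(x),y]$ identity (\ref{post2}) is just the Jacobi identity and identity (\ref{post1}) collapses, again by Jacobi, to $[\,R([x,y])-[R(x),R(y)]+R([R(x),y]+[x,R(y)]),\,z]=0$ for all $z$, so that centerlessness of $W$ makes the post-Lie property of $\circ$ \emph{equivalent} to (\ref{def-rbalgebra}). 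This upgrades Lemma \ref{rota} to an if-and-only-if on centerless Lie algebras and lets the converse half of Theorem \ref{postnon1} do all the work, replacing the sixteen case-by-case verifications the paper leaves implicit. The only point to make explicit when writing this up is that the product attached to each candidate $\mathcal{NR}^b_i$ or $\mathcal{MR}^b_i$ really is the corresponding $\mathcal{NP}^{b,\nu}_i$ or $\mathcal{MP}^{b,\nu}_i$ --- but that is the same one-line bracket computation already used in the forward direction.
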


\begin{proof}
Due to Lemma \ref{rota}, if we define a new operation $x\circ y = [R(x), y]$ on $W$, then $(W, [,], \circ)$ is  a post-Lie algebra. By (\ref{r2}) we have
$$
L_m\circ L_n=[R(L_m), L_n]=(m-n)f(m)L_{m+n}+(m-n+\nu)g(m)L_{m+n+\nu}, \ \forall m,n\in \mathbb{Z}.
$$
This means that $(W, [,], \circ)$ is a shifting post-Lie algebra structure satisfying (\ref{mainequ2}) on $W$ with $\phi(m,n)=(m-n)f(m)$
and $\varrho(m,n)=(m-n+\nu)g(m)$.
By Theorem \ref{postnon1}, we see that $f,g$ and $\nu$ must be one of the $16$ cases listed in Table 2, which can get the $16$ forms of $R$ one by one. On the other hand, it is easy to verify that every form of $R$ listed in the above is a Rota-Baxter operator of weight $1$ satisfying (\ref{r2}).
The proof is completed.
\end{proof}

\begin{remark}
The Rota-Baxter operators given by Theorem \ref{rb1} just are the all homogeneous Rota-Baxter operators of weight $1$ described in \cite{Galb}.  But the Rota-Baxter operators given by Theorem \ref{rb2} are new and non-homogeneous.
\end{remark}

\begin{remark}
The Rota-Baxter operators on the Witt algebra $W$ can be given a class of solutions of the classical
Yang-Baxter equation (CYBE) on $W\ltimes_{{\rm ad}^\ast} W^\ast$. The details can be found in \cite{Galb}, which
discuss the homogeneous case. Along the same lines, we can also consider the non-homogeneous case by use of Theorem \ref{rb2}. It is not discussed here.
\end{remark}

\section*{Acknowledgments}
This work is supported in part by NSFC (Grant No. 11771069), NSF of Heilongjiang Province (Grant No. A2015007), the Fund of Heilongjiang Education Committee (Grant No. 12531483 and No. HDJCCX-2016211).

\end{document}